\documentclass{amsart}

\usepackage{color, mathtools, verbatim, esint}

\mathtoolsset{showonlyrefs}

\newcommand\blfootnote[1]{%
  \begingroup
  \renewcommand\thefootnote{}\footnote{#1}%
  \addtocounter{footnote}{-1}%
  \endgroup
}

\newcommand{\ddb}{\sqrt{-1}\partial\overline{\partial}}

\renewcommand{\[}{\begin{equation} \begin{aligned} }
      \renewcommand{\]}{\end{aligned} \end{equation}}
\renewcommand{\phi}{\varphi}

\newtheorem{thm}{Theorem}
\newtheorem{prop}[thm]{Proposition}
\newtheorem{lem}[thm]{Lemma}

\newtheorem{conj}[thm]{Conjecture}

\theoremstyle{definition}

\numberwithin{equation}{section}
\author{G\'abor Sz\'ekelyhidi}
\address{Department of Mathematics, University of Notre Dame, Notre Dame, IN 46556}
\email{gszekely@nd.edu}
\title{Uniqueness of some Calabi-Yau metrics on $\mathbf{C}^n$}
\date{}
\begin{document}
\begin{abstract}
We consider the Calabi-Yau metrics on $\mathbf{C}^n$ constructed
recently by Yang Li, Conlon-Rochon, and the author, that have tangent
cone $\mathbf{C}\times A_1$ at infinity for the $(n-1)$-dimensional
Stenzel cone $A_1$. We
show that up to scaling and isometry this Calabi-Yau
metric on $\mathbf{C}^n$ is unique. We also discuss possible
generalizations to other manifolds and tangent cones. 
\end{abstract}
\maketitle
\section{Introduction}
\blfootnote{The author is supported in part by NSF grants DMS-1350696
and DMS-1906216.}
On a compact K\"ahler manifold with vanishing first Chern class, Yau's solution of
the Calabi conjecture~\cite{Yau78} shows that any K\"ahler class
admits a unique Calabi-Yau metric. In the non-compact setting
there are many constructions
of complete Calabi-Yau manifolds with different asymptotic behaviors
by Cheng-Yau~\cite{CY80}, Tian-Yau~\cite{TY90,TY91} and others,
and even fixing the K\"ahler class these metrics are typically
not unique. To recover uniqueness, in general one needs to put 
conditions on the asymptotics of the metric. Our goal in this paper is
to prove such a uniqueness result for certain Calabi-Yau metrics on
$\mathbf{C}^n$. 

The Taub-NUT
metric on $\mathbf{C}^2$ is an example of a non-flat K\"ahler metric with
the same volume form as the Euclidean metric (see
LeBrun~\cite{LeBrun93}). This metric does not have maximal
volume growth and in fact the flat metric is the
unique Ricci flat metric on $\mathbf{C}^2$ with maximal volume growth (see
Tian~\cite{TianAspects}). It turns out that in higher dimensions this is no
longer the case, and for $n\geq 3$, $\mathbf{C}^n$ admits a complete
Calabi-Yau metric $\omega_0$
with tangent cone $\mathbf{C}\times A_1$ at infinity. Here $A_1$ is the
$(n-1)$-dimensional $A_1$ singularity $x_1^2 + \ldots + x_n^2 = 0$
equipped with the Stenzel cone metric
(see Li~\cite{Li17}, Conlon-Rochon~\cite{CR17} and the
author's work~\cite{Sz17}). These metrics all have the same volume
form as the Euclidean metric, and in fact there are infinitely many other
metrics with the same volume form, and different tangent cones at infinity.

It is therefore natural to try to classify Calabi-Yau metrics with a
prescribed tangent cone at infinity.
Classification results have previously
been obtained by Kronheimer~\cite{Kro89} in the case of surfaces, and
Conlon-Hein~\cite{CH14} in higher dimensions in the asymptotically
conical setting, i.e. when the metric converges at a polynomial rate to a Ricci
flat K\"ahler cone with smooth link. For instance in \cite{CH14} the
asymptotically conical Calabi-Yau manifolds with tangent cone $A_1$
are classified. 
Compared to these the main novelty in our work is that we are able
to deal with tangent cones that do not have isolated
singularities. 
Our main result is the following uniqueness statement for the metric
$\omega_0$ on $\mathbf{C}^n$.
\begin{thm}\label{thm:main}
  Suppose that $\omega$ is a complete Calabi-Yau
  metric on $\mathbf{C}^n$ with tangent cone $\mathbf{C}\times
  A_1$ at infinity. Then
  there is a biholomorphism $F:\mathbf{C}^n \to \mathbf{C}^n$
  and a constant $a > 0$ such that $\omega = a F^*\omega_0$.
\end{thm}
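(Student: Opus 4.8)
The plan is to reduce the theorem to a Liouville-type statement for the complex Monge--Amp\`ere equation on $\mathbf{C}^n$, after first normalising $\omega$ and then aligning its behaviour at infinity with that of $\omega_0$. \emph{Step 1 (normalisation).} Since the tangent cone $\mathbf{C}\times A_1$ is a genuine $2n$-dimensional metric cone, $\omega$ has Euclidean volume growth, and the Ricci-flat condition then forces the volume form of $\omega$ to be a constant multiple of the Euclidean one: one writes $\omega^n=e^{\mathrm{Re}\,g}\,(\tfrac{\sqrt{-1}}{2})^{n}\,dz_1\wedge\dots\wedge dz_n\wedge d\bar z_1\wedge\dots\wedge d\bar z_n$ for an entire function $g$ (pluriharmonic functions on $\mathbf{C}^n$ being real parts of holomorphic ones), and the asymptotics of $\omega$ force $\mathrm{Re}\,g$ to be bounded, whence $g$ is constant by Liouville's theorem; the same holds for $\omega_0$, so after rescaling $\omega$ the two volume forms agree. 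Because $H^2(\mathbf{C}^n)=0$ we also have $\omega=\ddb\phi$ and $\omega_0=\ddb\phi_0$ for global smooth potentials, and then $\psi:=\phi-\phi_0$ solves the \emph{linear} equation $\ddb\psi\wedge T=0$, where $T:=\sum_{j=0}^{n-1}\omega^{j}\wedge\omega_0^{\,n-1-j}$ is a smooth, closed, strictly positive $(n-1,n-1)$-form. It thus suffices to produce a biholomorphism $F$ of $\mathbf{C}^n$ and a constant $a>0$ for which the potential of $F^*\omega-a\omega_0$ tends to $0$ at infinity.

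\emph{Step 2 (aligning the asymptotics).} This is the heart of the matter. Using the Cheeger--Colding theory of non-collapsed Ricci-flat limits together with the Donaldson--Sun description of tangent cones at infinity, I would match the end of $(\mathbf{C}^n,\omega)$, compatibly with the metric, with the end of the algebraic degeneration of $\mathbf{C}^n$ to $\mathbf{C}\times A_1$ underlying the construction of $\omega_0$; in particular this recovers the holomorphic ``$\mathbf{C}$-coordinate'' on the end, and composing $\omega$ with an automorphism of $\mathbf{C}^n$ — constructed first on a neighbourhood of infinity and then extended across a compact set by Hartogs' theorem — places $\omega$ in the same geometric framework as $\omega_0$. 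The key structural point is that the non-isolated singular set $\mathbf{C}\times\{\mathrm{vertex}\}$ of the model cone is, near each of its points other than the apex, a smooth surface times the $(n-1)$-dimensional $A_1$-cone, and \emph{this} cone has a smooth link; hence the geometry of $\omega$ near the singular ray is, by splitting off the $\mathbf{C}$-direction and invoking the Conlon--Hein uniqueness theorem~\cite{CH14} for asymptotically conical Calabi--Yau metrics on the $(n-1)$-dimensional Stenzel cone, modelled at a polynomial rate on $\mathbf{C}$ times the Stenzel metric on $T^*S^{n-1}$, while away from a neighbourhood of the singular ray the cone is smooth and the convergence is governed by the usual conical analysis. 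Patching these regions shows that $\omega$ converges, at a polynomial rate, to the explicit model from which $\omega_0$ was built in~\cite{Li17,CR17,Sz17}, and absorbing the finitely many moduli of the leading asymptotic term into the rescaling of Step~1 and the action of the automorphism group makes the potential difference $\psi$ decay at infinity. I expect this step to be the main obstacle: extracting sharp asymptotics from the sole hypothesis of a prescribed tangent cone is precisely what the standard asymptotically conical machinery fails to deliver when the singular locus is non-isolated, and it is the product structure of $\mathbf{C}\times A_1$ — which localises the problem, near the singular locus, to the well-understood asymptotically conical theory of the Stenzel cone — that makes it tractable.

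\emph{Step 3 (conclusion).} With $\psi\to 0$ at infinity and $\psi$ solving $\ddb\psi\wedge T=0$, I would apply the maximum principle: dividing by $\omega_0^{\,n}$ expresses the equation as $\sum_{j,k}T^{j\bar k}\psi_{j\bar k}=0$ with $(T^{j\bar k})$ smooth and positive definite, hence a uniformly elliptic equation on compact sets; a positive supremum of $\psi$ would, since $\psi\to 0$, have to be attained at a finite point, which the strong maximum principle forbids, and likewise for the infimum, so $\psi\equiv 0$. (Given in addition a definite polynomial decay rate for $\psi$ and $d\psi$, one may instead integrate $\ddb\psi\wedge T=0$ against $\psi$ over geodesic balls and let the radius tend to infinity: the boundary terms vanish and $\int_{\mathbf{C}^n}\sqrt{-1}\,\partial\psi\wedge\dbar\psi\wedge T=0$, forcing $d\psi\equiv 0$ since $T>0$.) Therefore $F^*\omega=a\omega_0$, which, together with the rescaling of Step~1, is the assertion of the theorem.
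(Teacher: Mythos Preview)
Your Step~2 contains the essential gap. You propose to control the metric near the singular ray $\mathbf{C}\times\{\text{vertex}\}$ by ``splitting off the $\mathbf{C}$-direction and invoking the Conlon--Hein uniqueness theorem'' for asymptotically conical metrics on the $(n-1)$-dimensional Stenzel cone. This does not go through: the metric $\omega$ is not a product near the singular ray, the slices $\{z=\text{const}\}$ are neither complete nor Ricci-flat for the restricted metric, and Conlon--Hein is a uniqueness statement for a single complete AC manifold, not a uniform convergence estimate in families. More fundamentally, the only hypothesis is Gromov--Hausdorff convergence to the tangent cone, and upgrading this to a polynomial rate of convergence of the metrics (or of K\"ahler potentials) near the non-isolated singular set is precisely the core difficulty; you yourself flag this as ``the main obstacle,'' but the proposed localisation does not resolve it. In the paper this is handled quite differently: one does not prove that $\psi\to 0$ at infinity for a single biholomorphism. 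Instead one proves an iterative decay estimate (Proposition~\ref{prop:decayest}): on each rescaled unit ball, after subtracting a pluriharmonic function and composing with an automorphism (determined via the classification of subquadratic harmonic functions on $\mathbf{C}\times A_1$, Lemma~\ref{lem:CYharmonic}), the potential gains a factor $\lambda^{2+\alpha}$ on the $\lambda$-ball. The control near the singular set is obtained not from AC theory but from a barrier/maximum-principle argument (Propositions~\ref{prop:barrier} and~\ref{prop:maxdecay}) showing that if the potential concentrates near the singular set it already decays rapidly. Iterating over scales and extracting a limit of biholomorphisms on a fixed ball then yields the isometry.

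A secondary issue: even granting your asymptotic control, passing from ``$\omega$ converges to the model at a polynomial rate'' to ``$\psi\to 0$'' is not automatic, since K\"ahler potentials are only determined up to pluriharmonic functions; killing the subquadratic pluriharmonic and automorphism pieces is exactly what the paper's scale-by-scale correction does, and cannot in general be absorbed into a single global automorphism chosen in advance. Your Step~3 maximum principle is fine once $\psi\to 0$ is known, but that is the whole content of the theorem.
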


We emphasize that for a Calabi-Yau manifold with maximal volume
growth the tangent cone at infinity has a natural complex structure on
the regular set (which extends in general to the singular set by the
main results in \cite{DS15,LiuSz2}). When we say that the tangent cone is
$\mathbf{C}\times A_1$ we are requiring that the complex structures
agree as well as the metric structures,
since in principle there may be different complex structures on
a given metric cone. 

The proof of Theorem~\ref{thm:main} can likely be extended
to classify Calabi-Yau metrics on $\mathbf{C}^n$ with other tangent
cones, as well as $\partial\bar{\partial}$-exact Calabi-Yau metrics on
more general manifolds.  We will discuss this in Section~\ref{sec:further}.
The proof relies on two main ingredients. On
the one hand, given a $\partial\bar\partial$-exact
Calabi-Yau metric $(X,\omega)$ with tangent cone
$C(Y)$, the work of Donaldson-Sun~\cite{DS15} gives an algebraic
description of the ring of polynomial growth holomorphic functions on
$(X,\omega)$ in terms of the coordinate ring of $C(Y)$. When $C(Y) =
\mathbf{C}\times A_1$, and $X\cong \mathbf{C}^n$,
then we can use this description to obtain an
embedding $X\to\mathbf{C}^{n+1}$ as the hypersurface $z + x_1^2 +
\ldots + x_n^2 =0$, such that the functions $z, x_i$ have degrees $1,
\frac{n-1}{n-2}$ respectively. This is the basic input that allows us
to compare the unknown metric $(X,\omega)$ with the reference metric
$(\mathbf{C}^n, \omega_0)$, which is constructed by viewing
$\mathbf{C}^n\subset\mathbf{C}^{n+1}$ as the same hypersurface. We
discuss this in Section~\ref{sec:embed}. 

While we end up proceeding in a different way, heuristically the idea is that
using such an embedding we can hope to find a biholomorphism $F : \mathbf{C}^n
\to X$, such that $F^*\omega = \omega_0 + \ddb\phi$ satisfies the
Monge-Amp\`ere equation
\[ \label{eq:MA1} (\omega_0 + \ddb\phi)^n = \omega_0^n, \]
and in addition $\phi$ has subquadratic growth in the sense that
$r^{-2}\sup_{B(0,r)}|\phi| \to 0$ as $r\to \infty$. In practice we are
not able to do this, but if we could, we would then like
to show that $F^*\omega=\omega_0$, in analogy with the
uniqueness result of Conlon-Hein~\cite[Theorem 3.1]{CH13} in the
setting of asymptotically conical spaces. Instead we can only find a
sequence of such biholomorphisms $F$ on larger and larger balls.
The technical heart of the
proof is Proposition~\ref{prop:decayest}, which roughly
speaking says that if on some large $R$-ball we have a solution of
\eqref{eq:MA1} such that $R^{-2}\phi$ is small, then on a smaller
$\lambda R$-ball we can find an ``equivalent'' potential $\phi'$ such
that $(\lambda R)^{-2}\phi'$ is even smaller. The proof of this
result will take up most of Section~\ref{sec:decay}. Iterating this, and
letting $R\to \infty$, leads to Theorem~\ref{thm:main}.

Finally let us mention some related works for minimal
hypersurfaces. Regarding the uniqueness of minimal hypersurfaces with
prescribed tangent cone at infinity,  Simon-Solomon~\cite{SS86} and
Mazet~\cite{Maz17} showed that minimal hypersurfaces in
$\mathbf{C}^{n+1}$ that are asymptotic to certain Simons cones are
essentially unique. At the same time, the works by Simon~\cite{Sim94, Sim93},
and more recently Colombo-Edelen-Spolaor~\cite{CES17}, address the
behavior of minimal submanifolds that are near to a cone with
non-isolated singularities, which is also a key point in our case.
While the details are very different, there are certainly similarities
between our approach and theirs.

\subsection*{Acknowledgements} I would like to thank Nick Edelen and
Gang Liu for insightful discussions,
as well as Shih-Kai Chiu and Yang Li for helpful comments on an
earlier draft of the paper. 

\section{The reference metric}\label{sec:ref}
In this section we give some preliminary results about the Calabi-Yau
metrics on $\mathbf{C}^n$ constructed in \cite{Li17,
  CR17,Sz17}. We follow the approach from \cite{Sz17}. We suppose that
$f(x_1,\ldots, x_n)$ is a polynomial such that $V_0 = f^{-1}(0)\subset
\mathbf{C}^n$ has an isolated normal singularity at the origin. We
assume that $V_0$ admits a Calabi-Yau cone metric $\omega_{V_0} = \ddb
r^2$, whose homothetic
action is diagonal, with weights $(w_1,\ldots,w_n)$, and $f$ is
homogeneous of degree $d > 2$ under this action. The basic example we
are concerned with is $f = x_1^2 + \ldots + x_n^2$, in which case
$w_i=\frac{n-1}{n-2}$, and we
let $r^2 = |x|^{2\frac{n-2}{n-1}}$. In general it follows from
Conlon-Hein~\cite{CH13} (see also \cite[Section 2]{Sz17}) that the
smoothing $V\subset\mathbf{C}^n$ given by the equation $1+f(x)=0$
admits a Calabi-Yau metric $\omega_{V_1} = \ddb\phi(x)$, with tangent
cone $V_0$ at infinity.

We then consider the hypersurface $X\subset
\mathbf{C}\times\mathbf{C}^n$ given by $z+f(x)=0$, which is biholomorphic to
$\mathbf{C}^n$. The main result of \cite{Sz17} is that there exists a
Calabi-Yau metric $\omega_0$ on $\mathbf{C}^n$ with tangent cone
$X_0= \mathbf{C}\times V_0$ at infinity, which is uniformly equivalent to
the metric
\[ \omega = \ddb\Big( |z|^2 + \gamma_1(R\rho^{-\alpha})r^2 +
  \gamma_2(R\rho^{-\alpha})|z|^{2/d} \phi(z^{-1/d}\cdot x)\Big) \]
outside a compact set. Here $\gamma_i(s)$ are suitable cutoff
functions such that $\gamma_1+\gamma_2=1$, $\gamma_1$ is supported
where $s > 0$ while $\gamma_2$ is supported where $s < 2$; the
function $R$ is such that $\ddb R^2$ defines a cone metric on
$\mathbf{C}^n$ with the same homothetic action as $V_0$; the function
$\rho^2=|z|^2 + R^2$; $\alpha\in (1/d,1)$ and $z^{-1/d}\cdot x$ is defined using the
homothetic action, choosing a branch of $\log$. The form $\omega$
defines a metric when restricted to
$X$ outside of a compact set, and the Calabi-Yau metric
$\omega_0$ that is constructed is asymptotic to $\omega$ at infinity,
in the sense that $|\omega_0 - \omega|_\omega \to 0$. The volume form
of $\omega_0$ is $\sqrt{-1}^{n^2}\Omega\wedge\bar\Omega$ for
\[ \label{eq:Omega}\Omega = \frac{dz \wedge dx_2\wedge \ldots \wedge
    dx_n}{\partial_{x_1}f}. \]
For more details see \cite{Sz17}. 

From \cite[Proposition 9]{Sz17} we have the following. For large $D$,
we can consider a new embedding $X\to\mathbf{C}^{n+1}$ by the
functions $z' = D^{-1}z, x_i' =D^{-w_i}x_i$. The image has equation
\[ Dz' + D^d f(x') = 0, \]
i.e. $D^{1-d}z' + f(x') = 0$, recalling that $f$ has degree $d$ under
the homothetic action. We equip this hypersurface $X'$ with the scaled down
metric $D^{-2}\omega_0$. Here, and below, let us denote by
$\Psi(\epsilon)$ a function converging to zero as $\epsilon\to 0$. This function may
change from line to line. From \cite[Proposition 9]{Sz17} we see that
there is a constant $\theta < \Psi(D^{-1})$ satisfying the
following. We define the map $G: B_{X'}(0,1) \to X_0$ using the
nearest point projection on the set where $|x'| > \theta$, and
projection onto the $z$-axis where $|x'| \leq \theta$. Then $G$ is a
$\Psi(D^{-1})$-Gromov-Hausdorff approximation to $B_{X_0}(0,1)$. 
One useful consequence of this is that the distance from the origin in
$(X,\omega_0)$ is uniformly equivalent to the function $\rho$. We will
need the following. 

\begin{prop}\label{prop:vlingrowth}
  \begin{itemize}
    \item[(a)]
  The holomorphic functions $z, x_i$ on $(X,\omega_0)$ have polynomial
  growth with degrees $d(z)=1, d(x_i)=w_i$.
  \item[(b)] Consider the special case
  $f = x_1^2 + \ldots + x_n^2$. Then the vector fields $2z\partial_z +
  x_i\partial_{x_i} $ and $a_{jk} x_j \partial_{x_k}$
  for skew-symmetric $(a_{jk})$ all have at most linear growth. 
  \end{itemize}
\end{prop}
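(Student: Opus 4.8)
The plan is to control the growth of each function or vector field by comparing it with the model cone $X_0 = \mathbf{C}\times V_0$, using the Gromov-Hausdorff approximation $G$ coming from \cite[Proposition 9]{Sz17} together with the fact that $\rho$ is uniformly equivalent to the distance function $d_{\omega_0}(0,\cdot)$. For part (a), I would first observe that on the cone $X_0$, the coordinate function $z$ is exactly a degree-$1$ function and each $x_i$ is a degree-$w_i$ function with respect to the homothetic action, so on $X_0$ the bounds $|z| \leq C\rho$, $|x_i| \leq C\rho^{w_i}$ hold on the nose. To transfer these to $(X,\omega_0)$, I would use the rescaling construction before the proposition: the embedding by $z' = D^{-1}z$, $x_i' = D^{-w_i}x_i$ equips $X'$ with the metric $D^{-2}\omega_0$, and $G$ identifies $B_{X'}(0,1)$ with $B_{X_0}(0,1)$ up to a $\Psi(D^{-1})$ error. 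Since the holomorphic functions $z', x_i'$ are a priori bounded on the unit ball (say by a fixed constant independent of $D$, which follows from the asymptotic description of $\omega_0$ and the mean value inequality for the subharmonic functions $|z'|, |x_i'|$), scaling back gives $|z| \leq C D$ and $|x_i| \leq C D^{w_i}$ on $B_{\omega_0}(0, cD)$; since $D$ is arbitrary and $\rho \sim d_{\omega_0}(0,\cdot)$, this is exactly polynomial growth of degree $1$ and $w_i$.

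For part (b), the Euler vector field $E = 2z\partial_z + \sum_i x_i\partial_{x_i}$ and the rotational vector fields $V_{a} = \sum_{j,k} a_{jk} x_j\partial_{x_k}$ (with $(a_{jk})$ skew) are the infinitesimal generators of the $\mathbf{C}^*$-action and the $SO(n,\mathbf{C})$-action, respectively, and both of these actions preserve the hypersurface $\{z + x_1^2 + \cdots + x_n^2 = 0\}$; in the Stenzel case the weights are all $w_i = \tfrac{n-1}{n-2}$, which is why the diagonal part of $E$ scales $x$ homogeneously. On the cone $X_0 = \mathbf{C}\times A_1$ these vector fields are literally the restrictions of the generators of the isometric/holomorphic cone symmetries, so their pointwise $\omega$-norm on $X_0$ grows linearly in the cone distance. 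I would then again transfer this estimate through $G$ and the rescaling, now applying the $C^1$-convergence (or a local elliptic/Cheeger-Colding estimate) of $D^{-2}\omega_0$ on $B_{X'}(0,1)$ to the cone metric on $B_{X_0}(0,1)$ to compare the norms of the (holomorphic, hence harmonic-coefficient) vector fields; the key point is that $E$ and $V_a$ are defined by the same formulas on $X'$ after rescaling up to the scaling weights, so the unit-ball bound rescales to a linear-in-$\rho$ bound on $(X,\omega_0)$.

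The main obstacle I expect is making the transfer of the pointwise norm estimates rigorous: a Gromov-Hausdorff approximation by itself only controls distances, not norms of tensors, so to bound $|E|_{\omega_0}$ or $|x_i|$ pointwise I really need the stronger convergence of $(X', D^{-2}\omega_0)$ to $(X_0,\omega_{X_0})$ in a $C^{1,\alpha}$ (or at least $C^0$ for the metric plus gradient estimates for holomorphic functions) sense on the regular part, away from the singular $z$-axis, and then a separate argument near the singular set. For the holomorphic functions in (a) this is cleaner, since $|z'|$ and $|x_i'|$ are plurisubharmonic and one can use the sub-mean-value inequality on $(X', D^{-2}\omega_0)$ — whose geometry is uniformly non-collapsed with a uniform bound on balls — to get the needed $L^\infty$ bound directly from an $L^2$ or Gromov-Hausdorff bound, bypassing the need for metric convergence. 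For the vector fields in (b), I would instead exploit that the flows of $E$ and $V_a$ are explicit holomorphic automorphisms of $X$, write the norm-squared $|E|_{\omega_0}^2$ as a function whose growth can be controlled by differentiating the comparison $\omega_0 \sim \omega$ from Section~\ref{sec:ref} and using that the explicit model form $\omega$ is invariant (or transforms with known weights) under these actions; the linear growth of $|E|_\omega$ and $|V_a|_\omega$ can then be checked directly on the explicit asymptotic potential, and the uniform equivalence $|\omega_0 - \omega|_\omega \to 0$ upgrades this to $(X,\omega_0)$.
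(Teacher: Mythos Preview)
Your strategy is in the right direction, but you are working much harder than necessary in (a) and not hard enough in (b).

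For (a), the paper's argument is a one-liner: since the distance from the origin is uniformly equivalent to $\rho = (|z|^2 + R^2)^{1/2}$, and $R$ is homogeneous of degree $1$ for the homothetic action with weights $w_i$ (so $|x_i| \lesssim R^{w_i}$), the bounds $|z| \leq \rho$ and $|x_i| \lesssim \rho^{w_i}$ are immediate. There is no need for the rescaling/Gromov--Hausdorff/mean-value machinery you propose; the growth rates can be read off directly from the definition of $\rho$.

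For (b), your second strategy---computing norms with respect to the explicit asymptotic form $\omega$ and then using the uniform equivalence $\omega \sim \omega_0$---is exactly what the paper does. But the paper does not ``check this directly on the asymptotic potential'' as a single global computation; it carries out a region-by-region analysis using the decomposition into Regions I--V from \cite[Proposition 5]{Sz17}. In each region one makes an explicit coordinate change (scaling $z$ and $x$ by different powers of $D$ or $K$ or $|z_0|^{1/d}$), identifies the local model for $\omega$ (the product cone in Regions I--III, the product of $\mathbf{C}$ with the smoothing $V_1$ in Region V), rewrites $z\partial_z$ and $x_j\partial_{x_k}$ in the new coordinates, bounds their norms for the model, and then scales back. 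Near the singular set (Region V) the model is \emph{not} the cone $X_0$ but the smoothing, and the relevant scaling factor is $|z_0|^{1/d}$ rather than $D$; the linear bound still comes out, but only after this explicit computation. Your first strategy---transferring through the Gromov--Hausdorff map $G$---would indeed fail here for the reason you anticipate: in Region V the map $G$ is just projection onto the $z$-axis, so no tensor-norm comparison is available. The paper sidesteps this entirely by never invoking $G$ in part (b) and working instead with the explicit local models.
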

\begin{proof}
  The statement in (a) is immediate from the fact that the distance
  function is uniformly equivalent to $\rho$.

  For part (b), we can work using the description of the metric
  $\omega$ in regions I--V in the proof of \cite[Proposition
  5]{Sz17} (note that $\omega$ is uniformly equivalent to
  $\omega_0$). In each region we choose new coordinates in which we
  have a good model for the form $\omega$ and so we can bound our
  vector fields. Let us consider regions I, III and V, the others being
  very similar. 

  \bigskip
  {\bf Region I:} Here $R > \kappa\rho$ for some fixed small $\kappa > 0$,
  and we assume $\rho\in (D/2, 2D)$ for $D$, which will then be
  uniformly equivalent to the distance from the origin. We change
  coordinates to $\tilde{z} = D^{-1}z$ and $\tilde{x} = D^{-1}\cdot
  x$, and we let $\tilde{r} = D^{-1}r$. In these coordinates $X$ has
  equation
  \[ \label{eq:D1} D^{1-d}\tilde{z} + f(\tilde{x}) = 0, \]
  and in the proof of \cite[Proposition
  5]{Sz17} the scaled down metric
  $D^{-2}\omega$ on this hypersurface is compared to the product metric
  \[ \label{eq:D2w} \ddb(|\tilde{z}|^2 + \tilde{r}^2), \]
  on the hypersurface with equation $f(\tilde{x})=0$ (i.e. the product
  $X_0$). Since
  $|\tilde{z}| < 2$ and $\tilde{r}\in (\kappa/2,
  4\kappa)$, as well as $d > 1$, as $D\to\infty$ then in these
  coordinates the hypersurface \eqref{eq:D1} converges smoothly to
  $X_0$. Because of this we can compute the norms of our vector fields
  with respect to the metric \eqref{eq:D2w}.  For this we have
  \[ z\partial_z = \tilde{z}\partial_{\tilde{z}}, \quad
    x_j\partial_{x_k} = \tilde{x}_j \partial_{\tilde{x}_k}. \]
  The norms of these vector fields are uniformly bounded for the
  metric in \eqref{eq:D2w}, which is uniformly equivalent to
  $D^{-2}\omega$ (under identifying the two hypersurfaces), and so
  \[ |z\partial_z|_\omega, |x_j\partial_{x_k}|_\omega < CD, \]
  for a constant $C$. 

  \bigskip
  {\bf Region III:} Here $R\in (K/2, 2K)$, and $K\in(\rho^\alpha,
  2\rho^\alpha)$. We suppose $\rho\in (D/2, 2D)$, so $|z|$ is
  comparable to $D$. We choose a fixed $z_0$ such that $|z-z_0|< K$,
  and we change variables as follows:
  \[ \tilde{z} = K^{-1}(z-z_0), \quad \tilde{x} = K^{-1}\cdot x, \quad
    \tilde{r} = K^{-1}r. \]
  In these coordinates $X$ is given by the equation
  \[ K^{-d}(K\tilde{z} + z_0) + f(\tilde{x}) = 0, \]
  and we compare again to the product metric 
  \[ \ddb(|\tilde{z}|^2 + \tilde{r}^2), \]
  on the hypersurface $f(\tilde{x})=0$. We have $|\tilde{z}| < 1$,
  $\tilde{R}\in (1/2, 2)$, and $K^{-d}z_0\to 0$ as $K\to\infty$, since
  $K^d \gg D$. This means that as $K,D\to\infty$, we can measure the
  norms of our vector fields on $X_0$ with the product metric. 
  We  have
  \[ z\partial_z = (K\tilde{z} + z_0)K^{-1} \partial_{\tilde{z}},
    \quad x_j\partial_{x_k} = \tilde{x}_j \partial_{\tilde{x}_k}. \]
  It follows that
  \[ |z\partial_z|_{K^{-2}\omega} < CK^{-1}D, \quad
    |x_j\partial_{x_k}|_{K^{-2}\omega} < C. \]
  Since $D$ is comparable to the distance from the origin, and $K\ll
  D$, this implies the estimate we want. 

  \bigskip
  {\bf Region V:} Here $R < 2\kappa^{-1}\rho^{1/d}$,
  $\rho\in(D/2,2D)$, so $|z|$ is comparable to $D$. We choose a fixed
  point $z_0$ with $|z-z_0| < D^{1/d}$, so we also have $|z_0|\sim
  D$. We scale our metric down by a factor of $|z_0|^{1/d}$, and
  change coordinates by
  \[ \tilde{z}= z_0^{-1/d}(z-z_0), \quad \tilde{x} = z_0^{-1/d}\cdot
    x, \quad \tilde{r} = |z_0|^{-1/d} r. \]
  We have $|\tilde{z}|, \tilde{r} < C$ for a uniform $C$. The equation
  of $X$ is
  \[ z_0^{1/d-1}\tilde{z} + 1 + f(\tilde{x}) = 0, \]
  and as $D\to \infty$, this converges to hypersurface with equation
  \[ 1 + f(\tilde{x}) = 0, \]
  and the metric $|z_0|^{-2/d}\omega$ converges to
  \[ \label{eq:m1} \ddb\Big(
  |\tilde{z}|^2 + \phi(\tilde{x})\Big). \] We have
  \[ z\partial_z = z_0^{-1/d}(z_0^{1/d}\tilde{z} + z_0)
    \partial_{\tilde{z}}, \quad x_j\partial_{x_k} =
    \tilde{x}_j\partial_{\tilde{x}_k}. \]
  The norms of these vector fields are uniformly bounded with respect
  to \eqref{eq:m1}, and so scaling back up, we have
  \[ |z\partial_z|_\omega < |z_0|^{1/d}C|z_0|^{-1/d}|z_0| < CD, \quad
    |x_j\partial_{x_k}|_\omega < C|z_0|^{1/d} < CD^{1/d},  \]
  which gives the required bound. 
\end{proof}

\subsection{Subquadratic harmonic functions on $\mathbf{C}\times A_1$}
Let us consider the tangent cone $C(Y) = \mathbf{C}\times
A_1$ embedded in $\mathbf{C}\times \mathbf{C}^n$ as the
hypersurface $x_1^2 +\ldots +x_n^2=0$, and equipped with the Stenzel
cone metric $\ddb(|z|^2 + |x|^{2\frac{n-2}{n-1}})$. We need to understand the harmonic
functions on $C(Y)$ with at most quadratic growth. In
Hein-Sun~\cite{HS16} a general result is given on Calabi-Yau cones
with isolated singularities, saying that the strictly subquadratic harmonic
functions are all pluriharmonic (this was first used crucially in
Conlon-Hein~\cite{CH13}), while the space of exactly quadratic 
growth harmonic functions decomposes as the sum of pluriharmonic
functions and harmonic functions that arise from isometries of the
link. See also Chiu~\cite{Chiu19} for results in the case of more
singular cones.  We have the following.

\begin{lem}\label{lem:CYharmonic}
  The space $\mathcal{H}_{\leq 2}$ of
  real harmonic functions on $C(Y)$ with at most quadratic growth
  are given by linear combinations of the following:
  \begin{enumerate}
  \item the real and imaginary parts of $1,z,z^2,x_i$,
  \item the function $(n-1)|z|^2-|x|^{2\frac{n-2}{n-1}}$,
  \item the functions $|x|^{-\frac{2}{n-1}}a_{jk}x_j\bar{x}_k$, where $(a_{jk})\in
    \sqrt{-1}\mathfrak{o}(n,\mathbf{R})$ is a purely imaginary complex
    orthogonal matrix.
  \end{enumerate}
\end{lem}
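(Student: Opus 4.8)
The plan is to reduce the problem to separation of variables on the link $Y = S^1 \times L$, where $L$ is the link of the Stenzel cone $A_1$, and then to combine the known structure of harmonic functions on each factor. Write the cone metric as $g = dr^2 + r^2 g_Y$ where $r$ is the radial function of $C(Y)$; since $C(Y) = \mathbf{C}\times A_1$ splits metrically, it is convenient to use the product radial functions $s = |z|$ on the $\mathbf{C}$ factor and $\sigma = |x|^{\frac{n-2}{n-1}}$ on the $A_1$ factor, so that $r^2 = s^2 + \sigma^2$. A harmonic function $u$ of at most quadratic growth, after decomposing into $S^1$-Fourier modes in the argument of $z$ and into eigenfunctions of the Laplacian on $L$, becomes a sum of terms of the form (function of $s$)$\times$(function of $\sigma$)$\times$(eigenfunction on $S^1$)$\times$(eigenfunction on $L$). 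The growth constraint $|u| \le C(s^2+\sigma^2)$ forces, for each such term, a bound on the growth in $s$ and in $\sigma$ separately.

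First I would recall the spectral input on the two factors. On $\mathbf{C}$, harmonic functions of at most quadratic growth are spanned by the real and imaginary parts of $1, z, z^2$ together with $|z|^2$ — this is elementary. On the Stenzel cone $A_1$, which is a Calabi-Yau cone with isolated singularity, Hein-Sun~\cite{HS16} gives exactly the statement that strictly subquadratic harmonic functions are pluriharmonic, while the quadratic ones are spanned by pluriharmonic functions and the functions coming from the link isometries; for $A_1$ the pluriharmonic functions of degree $\le 2$ are the real and imaginary parts of $1$ and $x_i$ (note $x_i$ has degree $\frac{n-1}{n-2}$ on $A_1$, which is $> 1$ but $< 2$), there are no holomorphic functions of degree exactly $2$, and the link isometries of $L$ come from the residual $SO(n,\mathbf{C})$-action on the quadric, producing the degree-$2$ harmonic functions $|x|^{-\frac{2}{n-1}}a_{jk}x_j\bar x_k$ with $(a_{jk}) \in \sqrt{-1}\,\mathfrak{o}(n,\mathbf{R})$; finally $\sigma^2 = |x|^{2\frac{n-2}{n-1}}$ itself is the quadratic harmonic function on $A_1$ coming from the radial eigenfunction. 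I would then do the cross-term bookkeeping: writing $u = \sum_{k\in\mathbf{Z}} e^{ik\theta} u_k(s,\sigma, y)$ where $\theta = \arg z$, the Laplacian separates, and on each factor the radial ODE has two indicial roots; the quadratic growth bound kills all the decaying/rapidly-growing indicial solutions and all spherical-harmonic degrees on $L$ whose associated homogeneity exceeds $2$. The surviving list is precisely: $1,z,z^2$ and their conjugates (pure $\mathbf{C}$-factor, constant on $A_1$); $x_i, \bar x_i$ (degree $\frac{n-1}{n-2}<2$ pluriharmonic on $A_1$, constant in $z$); the two independent quadratic radial modes $|z|^2$ and $\sigma^2 = |x|^{2\frac{n-2}{n-1}}$, of which only the harmonic combination on the product survives as an independent function, giving (after normalization) $(n-1)|z|^2 - |x|^{2\frac{n-2}{n-1}}$; and the link-isometry functions $|x|^{-\frac{2}{n-1}}a_{jk}x_j\bar x_k$. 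No genuinely mixed term (nonconstant in both $z$ and $x$) can have total degree $\le 2$ because the minimal positive degree on the $A_1$ factor, $\frac{n-1}{n-2}$, added to the minimal positive degree $1$ on the $\mathbf{C}$ factor already exceeds $2$ for $n\ge 3$; this observation is what collapses the list.

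The main obstacle is the careful treatment of the quadratic-growth boundary case on the singular cone $A_1$: one must make sure that "at most quadratic" (as opposed to strictly subquadratic) is handled, and that the contributions of the singular set of $A_1$ do not produce extra harmonic functions with mild singularities along $\mathrm{Sing}(A_1)$ — this is exactly where one invokes that the relevant function spaces are computed on the smooth cone and extend across the singularity by the results used in Hein-Sun~\cite{HS16} (and, for more singular situations, Chiu~\cite{Chiu19}). A secondary point to check is the claim that the degree-$2$ harmonic functions arising from link isometries of $Y = S^1\times L$ are exactly those coming from isometries of $L$, i.e. that the $S^1$-rotation contributes nothing new beyond $z, z^2$ already listed; this follows because the Killing field $\partial_\theta$ on the $\mathbf{C}$ factor is dual to the harmonic function $\mathrm{Im}(z^2)$ up to scale, which is already in item (1). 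Once these points are settled, assembling the list is routine, and one identifies the claimed generators by inspection.
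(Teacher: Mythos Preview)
Your approach is essentially the paper's: separate variables in the $\mathbf{C}$-direction and reduce to the Hein--Sun analysis on the $A_1$ factor, then observe that any genuinely mixed term has degree at least $1+\tfrac{n-1}{n-2}>2$. Two small corrections are worth making. First, the link $Y$ of the product cone $\mathbf{C}\times A_1$ is the spherical join $S^1 * L$, not the product $S^1\times L$; since you actually work on the product $\mathbf{C}\times A_1$ rather than on $Y$, this slip is harmless, but it means the phrase ``separation of variables on the link $Y=S^1\times L$'' is not quite right. Second, after separating out the $e^{ik\theta}$ and $L$-eigenfunction modes, what remains is a PDE in the two radial variables $(s,\sigma)$, not an ODE with ``two indicial roots on each factor''; the paper avoids this by expanding $f$ as a polynomial of degree $\le 2$ in $z,\bar z$ with coefficients that are functions on $A_1$ (citing \cite[Corollary~12]{Sz17} for the justification), which makes the bookkeeping transparent: the coefficient of $|z|^2$ is a bounded harmonic function on $A_1$, hence a constant $c$, and this forces $\Delta' f_0=-c$, producing the function in item~(2).
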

\begin{proof}
  A general approach to this result is to extend
  Hein-Sun~\cite[Theorem 2.14]{HS16} to singular tangent cones. This
  can be done along the lines of the work in Chiu~\cite{Chiu19}, using
  cutoff functions to justify the required integration by parts near
  the singular set.

  Alternatively we can follow the approach from \cite[Corollary
  12]{Sz17} using the Fourier transform in the $\mathbf{C}$-direction
  to analyze harmonic functions on the product $\mathbf{C}\times
  A_1$. The conclusion from this approach is that any harmonic
  function $f$ of at most quadratic growth can be written as
  \[ f = f_0 + zf_1 + \bar{z}f_{\bar 1} + z^2 f_{2} + \bar{z}^2
    f_{\bar 2} + |z|^2 f_{1\bar 1}, \]
  for functions $f_0, f_1, f_{\bar 1}, f_2, f_{\bar 2}, f_{1\bar 1}$
  on the cone $A_1$. We have
  \[ \Delta f = \Delta' f_0 + z\Delta' f_1 + \bar{z} \Delta' f_{\bar
      1} + z^2 \Delta' f_{2} + \bar{z}^2 \Delta' f_{\bar 2} + |z|^2
    \Delta' f_{1\bar 1} + f_{1\bar 1}, \]
  where $\Delta'$ is the Laplacian on $A_1$. It follows from $\Delta f
  = 0$ that
  \[ \Delta' f_1 = \Delta' f_{\bar 1} = \Delta' f_2 &= \Delta' f_{\bar
      2} = \Delta' f_{1\bar 1} = 0, \\
    f_{1\bar 1} + \Delta' f_0  &= 0. \]
  In addition since $f$ has at most quadratic growth, $f_1, f_{\bar
    1}, f_2, f_{\bar 2}$ are all subquadratic harmonic functions, so
  by \cite[Theorem 2.14]{HS16} they are pluriharmonic. Since the
  non-constant holomorphic functions on $A_1$ have faster than linear
  growth, these functions must all be constant. The function $f_{1\bar
    1}$ is harmonic, and $|z|^2f_{1\bar 1}$ has at most quadratic
  growth. It follows that $f_{1\bar 1} = c$ is constant. Then
  \[ c + \Delta' f_0 = 0, \]
  so $f_0' = (n-1)f_0 - c|x|^{2\frac{n-2}{n-1}}$ is harmonic, and has at most
  quadratic growth. Using \cite[Theorem 2.14]{HS16} again, we have
  that $f_0'$ is a linear combination of real and imaginary parts of
  $1,x_i$, and functions $u$ such that $V = \nabla u$ is a real holomorphic
  vector field on $A_1$ commuting with $r\partial_r$ such that
  $JV(r)=0$. We then have
  $V = \mathrm{Re}(a_{jk}x_j\partial_{x_k})$ for $a_{jk}$ a purely
  imaginary skew symmetric matrix. Using the identity $\ddb u =
  L_{\nabla u} \ddb r^2 = \ddb V(r^2)$,  up to adding a pluriharmonic
  function to $u$, we have
  \[ u = V(r^2) = V(|x|^{2\frac{n-2}{n-1}}) = \frac{n-2}{n-1}
    |x|^{-\frac{2}{n-1}} a_{jk}x_j\bar{x}_k. \]
  The result follows from this. 
\end{proof}
The functions in (1) are all the pluriharmonic functions of at most
quadratic growth, while (2) and (3) correspond
to automorphisms of $C(Y)$ commuting with the homothetic scaling,
which has weights $(1,\frac{n-1}{n-2},\ldots,\frac{n-1}{n-2})$ on
$(z,x_1,\ldots,x_n)$. As in the proof, the functions $|x|^{-\frac{2}{n-1}}a_{jk}x_j\bar{x}_k$
correspond to the vector fields $V_{\mathbf a} =
\frac{n-1}{n-2}\mathrm{Re}(a_{jk}x_j\partial_{x_k})$, in the sense
that
\[ V_{\mathbf{a}}(|z|^2 + |x|^{2\frac{n-2}{n-1}}) =
  |x|^{-\frac{2}{n-1}}a_{jk}x_j\bar{x}_k. \]
These vector fields preserve the hypersurfaces $cz + x_1^2 + \ldots
+x_n^2 = 0$ for all $c$, and the volume form $\Omega$. 

Similarly, the function
$(n-1)|z|^2-|x|^{2\frac{n-2}{n-1}}$ corresponds to the real holomorphic vector field
$W=\mathrm{Re}((n-1)z\partial_z - \frac{n-1}{n-2}x_i\partial_{x_i})$,
i.e. 
\[ W(|z|^2 + |x|^{2\frac{n-2}{n-1}}) =
  (n-1)|z|^2-|x|^{2\frac{n-2}{n-1}}. \]
This vector field $W$
preserves $C(Y)\subset\mathbf{C}^{n+1}$, however it does not preserve
the hypersurfaces $cz + x_1^2 + \ldots + x_n^2=0$. Instead we let
\[ V = \mathrm{Re}( z\partial_z + \frac{1}{2} x_i\partial_{x_i}), \]
which does preserve all of these hypersurfaces. The vector field $V$
satisfies
\[ L_V \Omega = \frac{n}{2}\Omega, \]
and
\[ V(|z|^2 + |x|^{2\frac{n-2}{n-1}}) - \frac{1}{2}(|z|^2 +
  |x|^{2\frac{n-2}{n-1}}) = \frac{1}{2}|z|^2 - \frac{1}{2(n-1)}
  |x|^{2\frac{n-2}{n-1}}, \]
which is a scalar multiple of the function in (2). We conclude the following.
\begin{lem}\label{lem:vbeta}
  Suppose that $h$ is a harmonic function on $C(Y)$ with at most
  quadratic growth, and write $h=h_{ph} + h_{aut}$, where $h_{ph}$ is
  in the span of the type (1) functions in Lemma~\ref{lem:CYharmonic},
  and is pluriharmonic, 
  while $h_{aut}$ is in the span of the type (2) and (3)
  functions.

  We can find a real holomorphic vector field $V$ preserving the
  hypersurfaces $cz + x_1^2 + \ldots + x_n^2=0$, and a constant
  $\beta$ such that $L_V\Omega = n\beta\Omega$, and
  \[ V(|z|^2 + |x|^{2\frac{n-2}{n-1}}) - \beta(|z|^2 +
    |x|^{2\frac{n-2}{n-1}}) = h_{aut}. \]
  In addition we have $|\beta| \leq C\Vert h\Vert$ and
  $V = a_0 z\partial_z + a_{jk}x_j\partial_{x_k}$ with $|a_0|,
  |a_{jk}| \leq C\Vert h\Vert$, for a constant $C$,
  where $\Vert h\Vert$ denotes the $L^2$ norm on $B(0,1)\subset C(Y)$.  
\end{lem}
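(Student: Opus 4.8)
The plan is to decompose $h_{aut}$ according to the two types of functions appearing in Lemma~\ref{lem:CYharmonic}, and to match each piece with an explicitly constructed vector field. Write $h_{aut} = c_2 \big((n-1)|z|^2 - |x|^{2\frac{n-2}{n-1}}\big) + |x|^{-\frac{2}{n-1}} a_{jk}x_j\bar x_k$ for some constant $c_2$ and some purely imaginary skew-symmetric matrix $(a_{jk})$. For the type (3) part, I would use exactly the vector fields $V_{\mathbf a} = \frac{n-1}{n-2}\operatorname{Re}(a_{jk}x_j\partial_{x_k})$ identified in the discussion after Lemma~\ref{lem:CYharmonic}, which satisfy $V_{\mathbf a}(|z|^2 + |x|^{2\frac{n-2}{n-1}}) = |x|^{-\frac{2}{n-1}} a_{jk}x_j\bar x_k$, preserve all the hypersurfaces $cz + x_1^2 + \ldots + x_n^2 = 0$, and preserve $\Omega$ (so contribute nothing to $\beta$). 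For the type (2) part, I would use the vector field $V = \operatorname{Re}(z\partial_z + \tfrac12 x_i\partial_{x_i})$ introduced in the excerpt: it preserves the hypersurfaces, satisfies $L_V\Omega = \tfrac n2 \Omega$, and the computation recorded just above the statement shows $V(|z|^2 + |x|^{2\frac{n-2}{n-1}}) - \tfrac12(|z|^2 + |x|^{2\frac{n-2}{n-1}})$ equals a fixed nonzero multiple of the type (2) function. Rescaling this $V$ by an appropriate constant multiple of $c_2$ produces the required $\beta$; explicitly, if $V(|z|^2+|x|^{2\frac{n-2}{n-1}}) - \tfrac12(\cdots) = \mu\big((n-1)|z|^2 - |x|^{2\frac{n-2}{n-1}}\big)$ with $\mu = \tfrac{1}{2(n-1)}$ from the displayed identity, then $(c_2/\mu)V$ does the job with $\beta = c_2/(2\mu)$.

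The only subtlety is the linearity of the correspondence $h \mapsto V$ in the scalar part: since the type (2) piece contributes $\tfrac12(|z|^2+|x|^{2\frac{n-2}{n-1}})$ to $V(\rho^2)$ but this cross term is linear in the chosen scaling, combining the two constructions additively gives $V = a_0 z\partial_z + a_{jk}x_j\partial_{x_k}$ with $a_0 = c_2/\mu$ (real) and $a_{jk}$ a rescaled copy of the skew-symmetric matrix, and the single constant $\beta = a_0/2$ satisfies both $L_V\Omega = n\beta\Omega$ (computing $L_{a_0 z\partial_z}\Omega = a_0\Omega$ and $L_{a_{jk}x_j\partial_{x_k}}\Omega = 0$, so $L_V\Omega = a_0\Omega = n\beta\Omega$ forces $\beta = a_0/n$; I would reconcile the normalization by redefining $V$ appropriately, e.g.\ taking $V = n\beta z\partial_z + a_{jk}x_j\partial_{x_k}$ with $\beta$ chosen so that $V(\rho^2) - \beta\rho^2$ matches $h_{aut}$) together with the required identity. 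This bookkeeping is routine; I expect the main point to be simply checking that the two model vector fields between them span all the automorphism-type functions, which is immediate from Lemma~\ref{lem:CYharmonic}.

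Finally, for the norm bounds, I would observe that the coefficients $c_2$ and $a_{jk}$ are recovered from $h_{aut}$ by integration against fixed $L^2$ functions on $B(0,1)\subset C(Y)$ — indeed the functions in families (1), (2), (3) are linearly independent elements of the finite-dimensional space $\mathcal H_{\leq 2}$, so the projection $h \mapsto h_{aut}$ and the further extraction of the coordinates $c_2, (a_{jk})$ in the fixed basis are bounded linear maps with respect to $\Vert \cdot\Vert$. Hence $|c_2|, |a_{jk}| \leq C\Vert h_{aut}\Vert \leq C\Vert h\Vert$, and since $a_0, \beta$ are fixed multiples of $c_2$ and the entries of $V$ are fixed multiples of $c_2$ and the $a_{jk}$, all the asserted bounds $|\beta|, |a_0|, |a_{jk}| \leq C\Vert h\Vert$ follow. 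There is no serious obstacle here; the whole argument is an explicit linear-algebra matching once Lemma~\ref{lem:CYharmonic} and the vector-field identities preceding the statement are in hand.
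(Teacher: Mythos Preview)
Your proposal is correct and follows the same approach as the paper, which derives the lemma directly from the vector-field identities recorded just before its statement. The bookkeeping inconsistency you flag is resolved by noting that in the decomposition $V = a_0 z\partial_z + a_{jk}x_j\partial_{x_k}$ the matrix $(a_{jk})$ absorbs the diagonal piece $\tfrac{a_0}{2}\delta_{jk}$ from the scalar field, so $L_{a_{jk}x_j\partial_{x_k}}\Omega = \tfrac{a_0}{2}(n-2)\Omega$ rather than zero, giving $L_V\Omega = \tfrac{na_0}{2}\Omega$ and hence $\beta = a_0/2$, consistent with your first formula.
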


\section{Special embeddings}\label{sec:embed}

In this section $(X,\eta)$ is a complete Calabi-Yau manifold such
that $X$ is biholomorphic to $\mathbf{C}^n$, and $X$ has tangent cone
$C(Y) = \mathbf{C}\times A_1$ at infinity. Let us fix a
basepoint $p\in X$, and denote by $B_i$ the ball $B(p, 2^i)$, with the
metric $2^{-2i}\eta$, so that $B_i$ is a unit ball in the scaled
down metric. By assumption, the sequence $B_i$ converges to the unit
ball $B(0,1) \subset C(Y)$ in the Gromov-Hausdorff sense. We will
view $C(Y)\subset \mathbf{C}^{n+1}$ as defined by the equation
$x_1^2+\ldots+x_n^2=0$, in terms of the coordinates $z,x_i$ on
$\mathbf{C}^{n+1}$ . The cone $C(Y)$ is equipped with the Ricci flat
Stenzel metric given by
\[ \ddb( |z|^2 + |x|^{2\frac{n-2}{n-1}} ), \]
which has volume form $\sqrt{-1}^{n^2}\Omega\wedge\bar\Omega$, in
terms of the $\Omega$ from \eqref{eq:Omega}. 
  The main result of this section is the following.
\begin{prop}\label{prop:goodembed}
  There is a sequence of holomorphic embeddings
  \[ F_i : X \to \mathbf{C}^{n+1}, \]
  with the following properties:
  \begin{enumerate}
    \item the image $F_i(X)$ is given by the equation
      \[ a_iz + x_1^2 + \ldots + x_n^2 = 0, \]
      for some $a_i > 0$,
    \item the volume form $\eta^n$ satisfies
      \[ 2^{-2ni}\eta^n = F_i^*(\sqrt{-1}^{n^2}\Omega\wedge\bar\Omega), \]
    \item  $a_i / a_{i+1} \to 2^{n/(n-2)}$ as $i\to\infty$,
    \item  on the ball $B_i$ the map $F_i$ gives a
      $\Psi(i^{-1})$-Gromov-Hausdorff approximation to the embedding
      $B(0,1)\to \mathbf{C}^{n+1}$. More precisely, we have a
      $\Psi(i^{-1})$-Gromov-Hausdorff approximation $g: B_i \to
      B(0,1)$, such that $|F_i- g| < \Psi(i^{-1})$ on $B_i$.
      Recall that here $B(0,1)\subset
      \mathbf{C}^{n+1}$ is the unit ball of $C(Y)$ under our
      embedding, and $\Psi(i^{-1})$ denotes a function converging to
      zero as $i\to\infty$. 
   \end{enumerate}
 \end{prop}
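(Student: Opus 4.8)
The plan is to construct the embeddings $F_i$ by combining the Donaldson–Sun theory with the specific algebraic structure of $\mathbf{C}\times A_1$. Since $(X,\eta)$ is $\partial\bar\partial$-exact (as $X\cong\mathbf{C}^n$) and has maximal volume growth with tangent cone $C(Y)=\mathbf{C}\times A_1$, the work of Donaldson–Sun identifies the ring $R(X)$ of polynomial-growth holomorphic functions on $(X,\eta)$ with the coordinate ring of the cone $C(Y)$. The coordinate ring of $\mathbf{C}\times A_1$ is $\mathbf{C}[z,x_1,\ldots,x_n]/(x_1^2+\cdots+x_n^2)$, graded by the homothety weights $d(z)=1$, $d(x_i)=\frac{n-1}{n-2}$. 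First I would extract from this a holomorphic function $\zeta$ on $X$ of degree $1$ and functions $\xi_1,\ldots,\xi_n$ of degree $\frac{n-1}{n-2}$ whose leading terms at infinity are the coordinate functions $z,x_i$ on $C(Y)$, and such that the single relation $\zeta + (\text{const})(\xi_1^2+\cdots+\xi_n^2)=0$ holds on $X$ (the relation $x_1^2+\cdots+x_n^2=0$ on the cone lifts to a relation of the same leading order, and since $z$ has strictly smaller degree, the error must be absorbed into $\zeta$ after rescaling). Normalizing $\xi_i$ by a unitary change so that the relation reads $a\zeta+\xi_1^2+\cdots+\xi_n^2=0$ for some $a>0$, the map $F=(\zeta,\xi_1,\ldots,\xi_n):X\to\mathbf{C}^{n+1}$ has image exactly the hypersurface in (1); that it is a biholomorphism onto this hypersurface follows because both sides are smooth affine varieties of the same dimension and $F$ is a proper injective immersion (injectivity and properness are consequences of the metric asymptotics, since $F$ is asymptotic to the identification of the cones).

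Next I would handle the volume form normalization (2). The hypersurface $\{a\zeta+\xi_1^2+\cdots+\xi_n^2=0\}$ carries the holomorphic volume form $\Omega$ from \eqref{eq:Omega}, and pulling it back, $F^*\Omega$ and the Calabi-Yau volume form of $\eta$ differ by the modulus-squared of a nowhere-vanishing holomorphic function on $X\cong\mathbf{C}^n$, hence by $e^{2\,\mathrm{Re}\,g}$ for an entire $g$; since both volume forms have the same (polynomial) growth rate dictated by the cone, $\mathrm{Re}\,g$ has at most logarithmic growth, so $g$ is constant, and absorbing this constant into the scaling $x_i\mapsto cx_i$, $z\mapsto c^{?}z$ (compatibly with the homothety weights so that the hypersurface equation is preserved up to changing $a$) gives (2). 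Rescaling the metric by $2^{-2i}$ and choosing the normalization of $\zeta$ so that $\zeta$ has unit size on the scaled ball $B_i$ produces the sequence $F_i$; the scaling factor $a_i$ picks up a power of $2$ per step determined by the weights, which is precisely the assertion (3): $z$ has weight $1$ and $x_i^2$ has weight $\frac{2(n-1)}{n-2}$, so matching the two terms under the scaling by $2$ forces $a_i/a_{i+1}\to 2^{\frac{2(n-1)}{n-2}-1}=2^{\frac{n}{n-2}}$.

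Finally, for (4), the Gromov-Hausdorff convergence $B_i\to B(0,1)\subset C(Y)$ together with the Donaldson–Sun construction guarantees that the functions $\zeta,\xi_i$ — normalized by their $L^2$-norms on $B_i$ — converge (in $C^\infty_{loc}$ on the regular set, and with uniform $C^0$ control coming from gradient estimates for holomorphic functions on Calabi-Yau manifolds with bounded geometry away from the singular set) to the coordinate functions on $C(Y)$. Concretely, if $g_i:B_i\to B(0,1)$ is the Gromov-Hausdorff approximation, then $|F_i-g_i|\to 0$ uniformly on $B_i$: away from a small neighborhood of the singular stratum this is the smooth convergence of the embeddings, and near the singular stratum both $F_i$ and $g_i$ are close to the $z$-axis, with $|x_i|$ small, so the estimate persists there too (using that the $\xi_i$ have controlled sup-norm near the singular set, which is the content of the degree count plus the maximum principle).

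\medskip

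The step I expect to be the main obstacle is making rigorous the passage from the abstract Donaldson–Sun identification of the ring $R(X)$ with the coordinate ring of the cone to the \emph{uniform-in-$i$} estimate $|F_i-g_i|<\Psi(i^{-1})$ in part (4), especially the $C^0$-control \emph{near the singular stratum} $\{x=0\}$ of the cone. Since $C(Y)=\mathbf{C}\times A_1$ does not have an isolated singularity, the link $Y$ is itself singular and one cannot directly invoke the clean asymptotically-conical theory; one must instead combine interior gradient estimates on the (smooth) manifold $(X,\eta)$ with a careful comparison of the weighted $L^2$-norms of $\zeta,\xi_i$ against the cone model, and check that the normalization constants $a_i$ behave as claimed even though the Gromov-Hausdorff approximation is only metric, not holomorphic, near the singular set. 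This is exactly where the detailed estimates from \cite{Sz17}, recalled in this section via the map $G$ and Proposition~\ref{prop:vlingrowth}, are needed to upgrade the metric convergence to the stated convergence of embeddings.
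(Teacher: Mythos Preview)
Your overall strategy is correct, but you have glossed over what is in fact the technical core of the proof, namely the reduction of the defining equation of $F_i(X)$ to the normal form $a_iz+x_1^2+\cdots+x_n^2=0$.

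First, the Donaldson--Sun theory does \emph{not} identify $R(X)$ with the coordinate ring of $C(Y)$ as rings; it only identifies the associated graded of the degree filtration on $R(X)$ with $R(C(Y))$. So once you choose $\zeta$ of degree $1$ and $\xi_i$ of degree $\frac{n-1}{n-2}$, the element $\xi_1^2+\cdots+\xi_n^2$ has degree strictly below $\frac{2(n-1)}{n-2}$, but that puts it in $I_{\frac{2n-3}{n-2}}$ (for $n>4$), which is spanned by $1,\zeta,\xi_j,\zeta^2,\zeta\xi_j$, not just by $1,\zeta$. Your sentence ``since $z$ has strictly smaller degree, the error must be absorbed into $\zeta$'' is therefore unjustified: one must first perform a linear change in the $\xi_j$ to normalize the quadratic form, then complete the square to kill the $\xi_j$ and $\zeta\xi_j$ (and, for $n\le 4$, $\zeta^2\xi_j$) terms. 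After this one is left with an equation $\tilde f(\zeta)+\sum\xi_j^2=0$ where $\tilde f$ is a polynomial of degree up to $2$ (or higher for $n\le 4$), and it is only the hypothesis $X\cong\mathbf{C}^n$ that forces $\tilde f$ to be linear --- you never invoke this. Second, even once $\tilde f(\zeta)=d\zeta+e$, the translation $\zeta\mapsto\zeta+d^{-1}e$ needed to kill the constant may be large, since both $d$ and $e$ are individually small. Controlling $d^{-1}e$ requires an additional argument (in the paper, a lemma producing a canonical common zero $o\in X$ of any such coordinate system), which you have not supplied.

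Finally, your expectation that part (4) is the main obstacle is somewhat misplaced. In the paper's approach one works from the outset with the \emph{sequence} of Donaldson--Sun adapted bases $\{G^i_a\}$ on the balls $B_i$, which by construction are $L^2$-orthonormal and converge under the Gromov--Hausdorff approximation to an orthonormal basis on $B(0,1)\subset C(Y)$; the $C^0$ convergence in (4), including near the singular set, then follows from the standard $L^\infty$ bounds (Moser iteration) that are part of the Donaldson--Sun package. All the coordinate changes above are then chosen $\Psi(i^{-1})$-close to the identity, so (4) persists. Your single-embedding-then-rescale approach obscures this and makes (3) and (4) look harder than they are.
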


The main input for this result is the work of Donaldson-Sun~\cite{DS15}
on the algebro-geometric study of tangent cones, and we first review
the results that we use.

\subsection{Donaldson-Sun theory} In \cite{DS12, DS15}, Donaldson-Sun
consider non-collapsed Gromov-Hausdorff limits of compact polarized
K\"ahler manifolds with bounded Ricci curvature. We observe that for
many of the arguments compactness is not required (see also
Liu~\cite{Liu17, Liu16} for 
related work in the non-compact setting). More precisely, 
suppose that $(M_i, L_i, \omega_i, p_i)$ is a sequence of complete
pointed $n$-dimensional
K\"ahler manifolds with line bundles $L_i\to M_i$ equipped with Hermitian
metrics with curvature $-\sqrt{-1}\omega_i$. In addition suppose that
we have the Einstein condition $\mathrm{Ric}(\omega_i) =
\lambda_i\omega_i$ with $|\lambda_i|\leq 1$, and the non-collapsing
condition $\mathrm{Vol}(B(p_i, 1)) > \kappa > 0$ for all $i$, for a fixed
$\kappa > 0$. If in addition we were to assume that the manifolds were
compact, then the sequence would be in the class
$\mathcal{K}(n,\kappa)$ considered in \cite{DS15}.

Let us suppose that $(Z, p)$ is the pointed Gromov-Hausdorff limit of
the sequence $(M_i, \omega_i, p_i)$. Then \cite[Theorems 1.1, 1.2,
1.3]{DS15} hold, i.e. $Z$ has the structure of a normal complex
analytic space, and tangent cones to $Z$ have the structure of affine
varieties and are unique. To see this note that the basic construction in
\cite{DS12} that is used in the arguments is to ``graft'' a holomorphic function from
a tangent cone to $Z$, using cutoff functions,
onto $M_i$ for sufficiently large $i$, and then use the H\"ormander
$L^2$-estimate the perturb the resulting approximately holomorphic
section of (a power of) $L_i$ to a holomorphic section $s$. The grafting
is a local construction, and the H\"ormander estimate holds on 
complete K\"ahler manifolds (see e.g. \cite[Theorem 4.5]{Demaillybook}). Finally
one uses Moser iteration and
Bochner-Weitzenbock type formulas to bound the $L^\infty$
norms of $s$ and $\nabla s$ in terms of the $L^2$-norm of $s$ (see
\cite[Proposition 2.1]{DS12}). Since under the non-collapsing
condition and Ricci curvature lower bound we can control the Sobolev
constant on geodesic balls (see e.g. Anderson~\cite[Theorem
4.1]{And92}), the same estimates hold in our setting. 

We need to use the results in \cite[Section 3.4]{DS15}, however the
assumptions there are that the limit space $(Z,p)$ is a scaled limit
of a sequence in $\mathcal{K}(n, \kappa)$, with scaling factors
tending to infinity. We claim, however, that the same results hold for
a complete Calabi-Yau manifold $(M,\omega)$, where $\omega=\ddb\psi$
for a global K\"ahler potential $\psi$, which has maximal volume growth:
$\mathrm{Vol}(B(p,r))> \kappa r^{2n}$ for all $r > 0$, for a
basepoint $p\in M$. The basic reason is that in this case the tangent
cone at infinity is still the Gromov-Hausdorff limit of a sequence of
polarized K\"ahler manifolds as above: 
for any sequence $\lambda_i\to 0$, we can 
consider the sequence $(M_i, \omega_i, L_i, p_i)$, where $M_i=M,
\omega_i = \lambda_i^2\omega, p_i=p$, and $L_i$ is the trivial bundle
equipped with the metric $e^{-\lambda_i^2\psi}$. Up to choosing a
subsequence, this sequence converges in the Gromov-Hausdorff sense
to a tangent cone at infinity $C(Y)$ of $(M,\omega)$. Using this, the
arguments in \cite[Section 2.2]{DS15} can be applied to the limit
space $C(Y)$ (instead of $(Z,p)$ in the statements of the propositions
there)
without any changes. In particular \cite[Proposition 2.9]{DS15} holds,
showing that holomorphic functions on a ball in $C(Y)$ can be
approximated by holomorphic functions on suitable balls in $M_i$. This
is a crucial ingredient in \cite[Proposition 3.26]{DS15}, which leads
to the algebro-geometric description of the tangent cone $C(Y)$ in
terms of the ring of polynomial growth holomorphic functions on
$(M,\omega)$. 

Let us  briefly recall the results that we need from \cite[Section
3.4]{DS15}, where for us $M$ plays the role of $Z$ there.
If $R(M)$ denotes the ring of polynomial
growth holomorphic functions on $M$, then $R(M)$ has a filtration
\[ \mathbf{C} = I_0 \subset I_1 \subset \ldots \subset R(M). \]
Here $I_k$ is the space of polynomial growth
holomorphic functions on $M$ with degree
at most $d_k$, where $0=d_0 < d_1 < \ldots$ are the possible growth
rates. For a holomorphic function $f$ on $X$, the growth rate
$d(f)$ is defined by
\[ d(f) = \lim_{r\to\infty} (\log r)^{-1} \sup_{B(p,r)} \log |f|, \]
and $f$ has polynomial growth if $d(f)  < \infty$. 
These growth rates are the same as the possible
growth rates on the tangent cone $C(Y)$, and the dimensions $\dim I_k$
are equal to the corresponding dimensions on $C(Y)$. Let us
write $R_{d_k}$ for the functions of degree $d_k$ on $C(Y)$, and
$\mu_k = \dim R_{d_k}$. 

By \cite[Proposition 3.26]{DS15} we can find decompositions
$I_k = I_{k-1} \oplus J_k$, where $\dim J_k = \mu_k$, and $J_k$ admits
an adapted sequence of bases. This means that, for fixed $k$, we have
a sequence of bases $\{G^i_1,\ldots, G^i_{\mu_k}\}$ for $J_k$, satisfying
\begin{enumerate}
\item $\Vert G^i_a\Vert_{B_i}=1$ for all $a$, and for $a\ne b$ we have
  $\lim_{i\to\infty} \int_{B_i} 
  G_a^i\overline{G_b^i} = 0$. Here
  $\Vert\cdot\Vert_{B_i}$ denotes the $L^2$-norm on $B_i$, and as
  above, $B_i$ is the ball $B(p, 2^i)$ scaled down to unit size.
\item $G_a^{i+1} = \mu_{ia}G_a^i + p^i_a$ for scalars $\mu_{ia}$, and
  $p^i_a\in \mathbf{C}\langle G^i_1,\ldots G^i_{a-1}\rangle$, with
  $\Vert p^i_a\Vert_{B_i}\to 0$ as $i\to\infty$. 
\item $\mu_{ia} \to 2^{d_k}$ as $i\to\infty$. 
\end{enumerate}
Suppose now that the coordinate ring $R(C(Y))$ is generated by
$\bigoplus_{k\leq k_0} R_{d_k}$. It follows then that $R(M)$ is generated
by $\bigoplus_{k\leq k_0} J_k$, and the adapted bases of $J_k$ for
$k\leq k_0$ define embeddings $F_i : M \to \mathbf{C}^N$. Furthermore,
under the Gromov-Hausdorff convergence $B_i\to B(0,1)\subset C(Y)$,
the maps $F_i$ converge to an embedding $B(0,1)\to \mathbf{C}^N$ by an
$L^2$-orthonormal basis of $R_{d_0} \oplus \ldots \oplus
R_{d_{k_0}}$. 

\subsection{Proof of Proposition~\ref{prop:goodembed}}
We now specialize to the setting of Proposition~\ref{prop:goodembed}. 
It will be helpful to write down the homogeneous holomorphic functions
of low degree on $C(Y)=\mathbf{C}\times A_1$.
Note that they are all spanned by
polynomials in $z,x_i$, and $d(z)=1, d(x_i)=\frac{n-1}{n-2}$. We treat
three cases separately:
\begin{itemize}
\item $n=3$. In this case we have
\[ R_0 &= \langle 1\rangle, \\
  R_1 &= \langle z\rangle,\\
  R_2 &= \langle z^2, x_i\rangle, \\
  R_3 &= \langle z^3, zx_i\rangle, \\
  R_4 &= \langle z^4, z^2x_i, x_ix_j \rangle, \]
where in $R_4$ one term is redundant because of the equation $x_1^2 +
\ldots + x_n^2 =0$. 
\item $n=4$. Here we have
  \[ R_0 &= \langle 1\rangle, \\
    R_1 &= \langle z\rangle,\\
    R_{3/2} &= \langle x_i\rangle, \\
    R_{2} &= \langle z^2\rangle, \\
    R_{5/2} &= \langle zx_i\rangle, \\
    R_3 &= \langle z^3, x_ix_j \rangle, \]
  where again one term in $R_3$ is redundant.
\item $n>4$.
    \[ R_0 &= \langle 1\rangle, \\
      R_1 &= \langle z\rangle,\\
      R_{\frac{n-1}{n-2}} &= \langle x_i\rangle,\\
      R_2 &= \langle z^2\rangle,\\
      R_{\frac{2n-3}{n-2}} &= \langle zx_i\rangle,\\
      R_{\frac{2n-2}{n-2}} &= \langle x_ix_j\rangle, \]
      where again one term in $R_{\frac{2n-2}{n-2}}$ is redundant. 
\end{itemize}

For simpicity we focus on the case $n=3$. The discussion in the other cases
is completely analogous. The ring $R(C(Y))$ is generated by $R_1\oplus
R_2$, and so 
by the results of Donaldson-Sun~\cite{DS15} discussed above, $R(X)$ is
generated by $I_2 = J_0\oplus J_1 \oplus J_2$. This space 
of holomorphic functions
on $X$ with at most quadratic growth has $\dim I_2 = 6$, and 
admits a sequence of adapted bases $\{G^i_1,\ldots, G^i_6\}$. 
The growth
rates of the functions are $d(G^i_1)=0, d(G^i_2)=1$ and $d(G^i_j)=2$
for $j=3,4,5,6$, and for each $i$ we obtain an embedding
\[ F_i : X \to \mathbf{C}^6, \]
with components $G^i_j$. On the balls $B_i$ these maps converge in
the Gromov-Hausdorff sense to an embedding $F_\infty
:B(0,1)\to\mathbf{C}^6$. The map $F_\infty$ is given by
functions on $C(Y)$ with the degrees specified above, that are
orthonormal on $B(0,1)$. Up to a unitary transformation commuting
with the homothetic scaling (which has degrees $(0,1,2,2,2,2)$) we can
assume that $F_\infty = (1,z,z^2,x_1,x_2,x_3)$. We can modify our
sequence of adapted bases by the same unitary transformation, so that
we still have $F_i\to F_\infty$ as $i\to\infty$. 

Since $I_0$ consists of just the constants, the first component of
$F_i$ is constant. In
addition, we have $\Vert (G^i_2)^2 - G^i_3\Vert_{B_i} \to 0$ by the convergence of
$F_i$ to $F_\infty$, while also $(G^i_2)^2 - G^i_3 \in I_2$. It
follows that $G^i_3 = (G^i_2)^2 + \sum_a q_{ia}G^i_a$, where $|q_{iq}|
< \Psi(i^{-1})$. Therefore dropping the first and third components of $F_i$, we
still obtain embeddings $F_i' : X \to \mathbf{C}^4$. Let us now fix
$i$, and abusing notation, let us denote by $z, x_1, x_2, x_3$ the
pullbacks under $F_i'$ to $X$ of the coordinate functions on
$\mathbf{C}^4$. By construction we have $d(z)=1, d(x_i)=2$. The 20 
functions
\[ 1, z, z^2, x_i, z^3, zx_i, z^4, z^2x_j, x_jx_k \]
are all in the space $I_4$ of at most quartic growth functions on $X$,
but by the earlier discussion $\dim I_4 = 19$. Therefore we have one
linear dependency between them, which determines the equation $f_i$ in
$\mathbf{C}^4$ defining $F_i'(X)$. Because of the Gromov-Hausdorff
convergence of the $F_i'$ to the embedding of $B(0,1)$ satisfying
$x_1^2+x_2^2+x_3^2=0$, the equation $f_i(z,x_1,x_2,x_3)=0$
has to be a perturbation of this equation. We can apply a
linear transformation in $x_1,x_2,x_3$ that is $\Psi(i^{-1})$-close to the identity,
to transform the quadratic expression in the $x_j$ that appears in
$f_i$ to the quadric $x_1^2+x_2^2+x_3^2$. Next we can complete the
square in the $x_j$, applying changes of coordinates of the form
$x_j\mapsto x_j + a_jz^2 + b_jz + c_j$ with small $a_j, b_j, c_j$, to eliminate the
terms of the form $x_j, zx_j, z^2x_j$ in $f_i$. We have now reduced our
equation to one of the form
\[ \label{eq:reducedeq} \tilde{f}_i(z) + x_1^2+ x_2^2+x_3^2 = 0, \]
where $\tilde{f}_i(z)$ is a quartic polynomial in $z$ with
coefficients of order $\Psi(i^{-1})$. Since $X$ is
biholomorphic to $\mathbf{C}^3$, $\tilde{f}_i$ must actually be
linear, so $f_i(z) = d_iz + e_i$ for $|d_i|, |e_i| < \Psi(i^{-1})$
with $d_i\ne 0$. We can assume that $d_i > 0$ by multiplying $z$ by a
unit complex number. 

So far we have
only performed coordinate changes $\Psi(i^{-1})$-close to the
identity,  and so the new maps $F_i'':X\to\mathbf{C}^4$
that we obtain still converge on $B_i$, in the Gromov-Hausdorff sense,
to our standard embedding $B(0,1) \to\mathbf{C}^4$. The image
$F_i''(X)$ satisfies the equation
\[ e_i + d_i z + x_1^2 + x_2^2 + x_3^2=0. \]
We want to perform a further coordinate change $z \mapsto z + d_i^{-1}e_i$,
so that the equation reduces to $d_i z + x_1^2+x_2^2+x_3^2=0$,
however this may not be a small coordinate change, since we do not
yet have information about the relative sizes of $d_i,e_i$. For this
we need the following.

\begin{lem} \label{lem:o}
  Suppose that we have polynomial growth holomorphic functions $w,
  y_1,y_2,y_3$ on $X$ such that $d(w)=1$ and $d(y_j)=2$, and 
  \[ cw + y_1^2+y_2^2+y_3^2=0, \]
  for some $c\ne 0$. Then there exists a point $o\in X$ such that
  $w(o) = y_j(o) = 0$, and moreover this point is independent of the
  choice of functions $w,y_j$ as above.
\end{lem}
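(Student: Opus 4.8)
The plan is to use the Donaldson--Sun description of the ring $R(X)$ of polynomial growth holomorphic functions on $X$, together with one of the embeddings of Proposition~\ref{prop:goodembed}, to pin down the ideal generated by $w,y_1,y_2,y_3$ inside $R(X)$ explicitly; the point $o$ is then the point cut out by this ideal. I would treat the case $n=3$, the others being entirely analogous.

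First I would fix one of the embeddings $F=F_i\colon X\to\mathbf{C}^4$ of Proposition~\ref{prop:goodembed}, so that $F$ is a biholomorphism onto the smooth hypersurface $V=\{az+x_1^2+x_2^2+x_3^2=0\}\cong\mathbf{C}^3$ with $a>0$, and write $\zeta,\xi_1,\xi_2,\xi_3$ for the pullbacks to $X$ of $z,x_1,x_2,x_3$. Then $d(\zeta)=1$, $d(\xi_k)=2$, $a\zeta+\sum_k\xi_k^2=0$, and by the results of \cite{DS15} recalled in Section~\ref{sec:embed} the functions $\zeta,\xi_1,\xi_2,\xi_3$ generate $R(X)$, so that $R(X)\cong\mathbf{C}[V]=\mathbf{C}[z,x_1,x_2,x_3]/(az+x_1^2+x_2^2+x_3^2)$, a three-dimensional domain whose associated graded for the degree filtration is $R(C(Y))=\mathbf{C}[\bar z,\bar x_1,\bar x_2,\bar x_3]/(\bar x_1^2+\bar x_2^2+\bar x_3^2)$, with $\zeta,\xi_k$ having symbols $\bar z,\bar x_k$. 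In particular $1,\zeta,\zeta^2,\xi_1,\xi_2,\xi_3$ is a basis of the space $I_2$ of functions of degree at most $2$.

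The heart of the argument is a degree-by-degree analysis of the identity $cw+\sum_j y_j^2=0$. Since $d(w)=1$ and $\dim I_1=2$, one can write $w=p\zeta+q$ with $p\ne0$; and $y_j=\sum_k B_{jk}\xi_k+b_j\zeta^2+e_j\zeta+f_j$ for a matrix $B=(B_{jk})$ which is invertible, because the symbols of $w,y_1,y_2,y_3$ then generate $R(C(Y))$ and this forces $\sum_k B_{jk}\bar x_k$ to span $\langle\bar x_1,\bar x_2,\bar x_3\rangle$ --- this is the point at which I would use that $w,y_1,y_2,y_3$ arise from an embedding of $X$ into $\mathbf{C}^4$, as they do in the application. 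Since $\sum_j y_j^2=-cw$ has degree $1$, all its homogeneous components of degree $>1$ must vanish in $R(C(Y))$. Comparing the degree-$4$ component, and using that $\bar x_1^2+\bar x_2^2+\bar x_3^2=0$ is the only relation of $R(C(Y))$ in that degree (so $\bar z^4$ and $\bar z^2\bar x_k$ are nonzero and independent there), gives $B^\top B=\mu I$ for a scalar $\mu$, together with $\sum_j b_j^2=0$ and $B^\top b=0$; as $B$ is invertible, $\mu\ne0$ and $b=0$. The key simplification is that then $\sum_j\bigl(\sum_k B_{jk}\xi_k\bigr)^2=\mu\sum_k\xi_k^2=-\mu a\,\zeta$, so this term collapses from degree $4$ straight down to degree $1$; substituting it back, the degree-$3$ and degree-$2$ components of $\sum_j y_j^2$ become multiples of the $\bar z\bar x_k$ and of the $\bar x_k$ respectively, and their vanishing gives $B^\top e=0$ and $B^\top f=0$, hence $e=f=0$. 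Therefore $y_j=\sum_k B_{jk}\xi_k$ with $B$ invertible, and $cw=-\sum_j y_j^2=\mu a\,\zeta$, so $w$ is a nonzero scalar multiple of $\zeta$.

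To conclude, the previous step shows that the ideal $(w,y_1,y_2,y_3)\subseteq R(X)$ equals $(\zeta,\xi_1,\xi_2,\xi_3)$, which under $R(X)\cong\mathbf{C}[V]$ is precisely the maximal ideal at the origin $0\in V$; so, with $o:=F^{-1}(0)\in X$, we have $w(o)=y_j(o)=0$, and in fact the common zero locus of any admissible tuple $(w,y_j)$ is exactly the one point $o$. Applying this to the coordinate functions of another embedding $F_{i'}$ shows $o$ is independent of $i$, hence of all choices. The step I expect to take the most care is this degree-by-degree computation --- in particular, tracking the cancellations that make $\sum_j(\sum_k B_{jk}\xi_k)^2$ drop to degree $1$ --- together with making sure the hypothesis is used in the form ``$w,y_1,y_2,y_3$ generate $R(X)$'' (equivalently, that $B$ is invertible), which is what rules out degenerate tuples.
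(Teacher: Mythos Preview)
Your approach is essentially the paper's: fix a reference embedding with coordinates $z,x_j$ satisfying $c'z+\sum x_j^2=0$, write $w,y_j$ in this basis using the known structure of $I_2$, expand $cw+\sum y_j^2$, and use that the unique linear dependence among the twenty degree-$\le 4$ monomials is the defining equation to force the transition data to be of the form $w=(\text{scalar})\cdot z$, $\mathbf{y}=B\mathbf{x}$ with $B$ invertible.

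Two remarks. First, you invoke Proposition~\ref{prop:goodembed} for the reference embedding, but the lemma is used \emph{inside} the proof of that proposition; like the paper, you should instead take as reference the preliminary embedding $F_i''$ constructed in the paragraphs immediately preceding the lemma, which already has image $\{e_i+d_iz+\sum x_j^2=0\}$. Second, you are right to flag that the invertibility of $B$ requires the extra hypothesis that $w,y_1,y_2,y_3$ generate $R(X)$. The paper asserts that invertibility follows once one knows $A^TA=\mu I$, but this fails when $\mu=0$, and indeed the lemma as literally stated admits counterexamples: for instance $w=2z-1$, $y_1=z^2+z-1$, $y_2=z^2-z$, $y_3=i\sqrt{2}\,z^2$ satisfy $w+\sum y_j^2=0$ identically with $d(w)=1$, $d(y_j)=2$, yet have no common zero on $X$. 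Since every tuple arising in the application comes from an actual embedding $F_i''$, your added hypothesis is both harmless there and, as this example shows, necessary for the argument.
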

\begin{proof}
  In the above discussion we have already constructed functions
  $z,x_j$ satisfying
  \[ c'z + x_1^2 + x_2^2 + x_3^2 = 0. \]
  Let us denote by $o\in X$ the point where $z(o)=x_i(o)=0$. 
  In terms of these functions, because of the degree restrictions, we must have
  \[ w &= az+b, \\
    \mathbf{y} & =A\mathbf{x} + \mathbf{c}z^2 +\mathbf{d}z  + \mathbf{e}. \]
  By scaling we can assume that $c=c'=1$ to simplify notation. We have
  \[ \label{eq:wy}
    0 = w + \mathbf{y}^T \mathbf{y} = az + b + (A\mathbf{x} + \mathbf{c}z^2 +\mathbf{d}z  + \mathbf{e})^T(A\mathbf{x} + \mathbf{c}z^2 +\mathbf{d}z  + \mathbf{e}). \]
  By our dimension counts earlier, up to a scalar multiple,
  there is only one linear equation
  satisfied by the functions
  \[ 1,z,z^2,x_j, z^3, zx_j, z^4, z^2x_j, x_jx_k, \]
  namely $z + \mathbf{x}^T\mathbf{x}=0$. Therefore if we also have $w
  + \mathbf{y}^T\mathbf{y}=0$, then from \eqref{eq:wy} we first find
  that $A^TA$ is a multiple of the identity, and in particular $A$ is
  invertible. It then follows that we have
  $\mathbf{c}=\mathbf{d}=\mathbf{e}=0$, and in turn $b=0$. Therefore
  $w(o)=y_j(o)=0$, and conversely $o$ is the unique point where $w,
  y_j$ all vanish. 
\end{proof}

Let us return to the proof of Proposition~\ref{prop:goodembed}, and the
map $F_i'':X\to\mathbf{C}^4$ whose image satisfies the equation
\[ d_i(z + d_i^{-1}e_i) + x_1^2+ x_2^2+x_3^2=0. \]
By the Lemma above we have $z(o) = -d_i^{-1}e_i$, however since $F_i$
converges to the embedding $B(0,1)\to\mathbf{C}^4$, we also have
$F_i(o)\to 0$. For this note that under the rescaled metric in the
ball $B_i$ (centered at $p$) the point $o$ is contained in a ball of
radius $C2^{-i}$ around $p$
where $C = \mathrm{dist}_\eta(p,o)$. In particular we have
$d_i^{-1}e_i\to 0$, and so we can perform a final small coordinate
change to reduce to the case $e_i=0$.

Next we consider the volume forms. The pullback $(F_i'')^*(\Omega)$
defines a polynomial growth, nowhere vanishing holomorphic volume form
on $X$. By the Calabi-Yau condition on $\eta$
\[ F_i''^*(\sqrt{-1}\Omega\wedge\bar\Omega) = |g_i|^2 \eta^3, \]
where $g_i$ is a nowhere vanishing polynomial growth holomorphic
function on $X$. This means $g_i$ can be written as a polynomial in
$x_1,x_2,x_3$, and therefore it is constant since it has no
zeros. By the volume convergence under Gromov-Hausdorff
convergence~\cite{Col97}, remembering that we are using the scaled
down metric $2^{-2i}\eta$ in the convergence, we find that $|g_i|^2 2^{6i}\to 1$ as
$i\to\infty$. Scaling $z$ by a factor $\Psi(i^{-1})$-close to 1, we can arrange that
$|g_i|^2=2^{-6i}$. 

Finally we address the claim that $a_i/a_{i+1}\to 8$.
Let us denote the components of our maps $F_i''$
by $z_i, x_{i,1}, x_{i,2}, x_{i,3}$, so that
\[ a_iz_i + x_{i,1}^2 + x_{i,2}^2 + x_{i,3}^2 = 0, \]
with $a_i > 0$. 
From the proof of Lemma~\ref{lem:o} we know that
\[ z_{i+1} &= c_i z_i, \\
  \mathbf{x}_{i+1} &= \sqrt{\frac{a_{i+1}c_i}{a_i}}A_i \mathbf{x}_i,\]
for a constant $c_i$ and a complex orthogonal matrix $A_i$. Using
that $d(z_i)=1, d(x_{i,j})=2$, and that the $z_i, x_{i,j}$ have
normalized $L^2$-norm approximately equal to 1 on $B_i$, we have
$c_i\to 2^{-1}$ and $\sqrt{a_i^{-1}a_{i+1}c_i} \to 2^{-2}$ as
$i\to\infty$. It follows that $a_i^{-1}a_{i+1} \to 2^{-3}$.

The cases $n=4$ and $n>4$ can be treated in an almost identical
way, except in both cases we have $\dim I_2 = 5$, so we do not have to
worry about the function $z^2$. In addition in the equation analogous to
\eqref{eq:reducedeq}, the degree of $\tilde{f}_i$ is at most cubic when
$n=4$, and at most quadratic when $n > 4$. We leave the details to the reader. 

\section{Decay of the K\"ahler potential}\label{sec:decay}
In this section, we let $\omega = c^2\omega_0$ be a scaled
down copy of the reference Calabi-Yau metric on $\mathbf{C}^n$,
and we denote by $o\in
(\mathbf{C}^n,\omega)$ the origin. For a given $\epsilon > 0$ we will
assume that $c$
is sufficiently small so that $d_{GH}(B(o, \epsilon^{-1}),
B(0, \epsilon^{-1})) < \epsilon$, where $0\in C(Y)$ is the origin in
the cone $C(Y) = \mathbf{C}\times A_1$. Since $C(Y)$ is the tangent
cone at infinity, this condition is equivalent to the Gromov-Hausdorff
distance $d_{GH}(B(o,1), B(0,1))$ being sufficiently small. 
As in Section~\ref{sec:ref} we have
an embedding $F_c : \mathbf{C}^n\to \mathbf{C}^{n+1}$ using the functions $cz,
c^{\frac{n-1}{n-2}}x_i$, with image given by the equation
\[ c^{\frac{n}{n-2}}z + x_1^2 +\ldots +x_n^2 = 0. \]
In addition the maps $F_c$ on $B(o,1)$ converge in the
Gromov-Hausdorff sense to the standard embedding
$B(0,1) \to \mathbf{C}^{n+1}$ of the unit ball in $C(Y)$ as $c\to 0$.

The main technical result of the paper is the following. 
\begin{prop}\label{prop:decayest}
  There are $\lambda_0, \alpha$ such that if $\lambda <
  \lambda_0$, and $\epsilon$
  is sufficiently small (depending on $\lambda$), then
  we have the following. 
  Suppose that $c$ above is small enough so that
  $d_{GH}(B(o, \epsilon^{-1}), B(0, \epsilon^{-1})) < \epsilon$, and 
  we have a smooth
  function $u$ on $B(o,1)$ satisfying $\sup_{B(o,1)}|u| < \epsilon$ and
  \[ (\omega + \ddb u)^n = \omega^n. \]
  Then we can find a constant $\beta$, an automorphism $g$ of
  $\mathbf{C}^n$ fixing the origin $o$, and
  a smooth function $u'$ on $B(o,1)$  satisfying
  \begin{enumerate}
    \item $\beta g^*(\omega + \ddb u) = \omega + \ddb u'$,
    \item $(\beta g^*\omega)^n = \omega^n$,
    \item $\sup_{B(o, \lambda)} |u'| \leq \lambda^{2+\alpha}
      \sup_{B(o,1)} |u|$. 
  \end{enumerate}
\end{prop}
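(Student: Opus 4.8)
The plan is to run a contradiction/compactness argument followed by a linear decomposition of the limiting object, in the spirit of the blow-up arguments for minimal surfaces near singular cones (Simon, Colombo--Edelen--Spolaor) and the $\epsilon$-regularity techniques in the asymptotically conical Calabi--Yau literature (Conlon--Hein, Hein--Sun). Suppose the statement fails: then for fixed $\lambda < \lambda_0$ (with $\lambda_0, \alpha$ to be chosen) there is a sequence $c_j \to 0$ and functions $u_j$ on $B(o,1)$ with $\sup_{B(o,1)}|u_j| = E_j \to 0$ (I normalize $E_j := \sup|u_j|$; note $E_j \le \epsilon_j \to 0$), solving $(\omega_{c_j} + \ddb u_j)^n = \omega_{c_j}^n$, such that no admissible $(\beta, g, u')$ achieves the decay $\sup_{B(o,\lambda)}|u'| \le \lambda^{2+\alpha} E_j$. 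The first step is to rescale: set $v_j := E_j^{-1} u_j$, so $\sup_{B(o,1)}|v_j| = 1$. Since $\omega_{c_j} \to$ the Stenzel cone metric on $C(Y) = \mathbf C \times A_1$ in the Gromov--Hausdorff (indeed $C^\infty_{loc}$ on the regular part) sense, and the Monge--Amp\`ere equation linearizes, I expect $v_j$ to converge, locally uniformly away from the singular stratum $\{x=0\}$ and in a suitable weak sense across it, to a harmonic function $h$ on $B(0,1) \subset C(Y)$ with $\sup|h| \le 1$.

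The second step is to control $h$ using Lemma~\ref{lem:CYharmonic}. The subtlety is that $h$ is only known to be harmonic on the regular part and a priori bounded; I need an a priori estimate (Cheng--Yau gradient estimate on the smooth locus, plus a capacity/removable-singularity argument for the codimension-$\ge 2$ singular stratum of $C(Y)$, or alternatively a uniform bound on $\Vert v_j\Vert_{W^{1,2}}$ obtained by testing the linearized equation against cutoffs) to conclude $h$ extends to a genuine harmonic function on $B(0,1) \subset C(Y)$ of at most quadratic — here in fact linear — growth in the sense needed to invoke the classification. (Since $\sup_{B(0,1)}|h|\le 1$ and $h$ is harmonic on a cone, scaling gives at most linear, hence at most quadratic, growth.) Then $h$ lies in the span of the functions listed in Lemma~\ref{lem:CYharmonic}: pluriharmonic parts $\mathrm{Re}(a + b z + c_i x_i)$ of degree $\le \frac{n-1}{n-2} < 2$, plus the automorphism parts of type (2), (3). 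Write $h = h_{ph} + h_{aut}$ as in Lemma~\ref{lem:vbeta}.

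The third step is to \emph{subtract off} $h$ by an honest change of coordinates, so that the error is genuinely higher order. The pluriharmonic piece $h_{ph}$ of $h$ is the real part of a holomorphic function $H$ on $C(Y)$ of degree $< 2$; since $\omega_{c_j} = \ddb(\text{potential})$ and $\ddb\,\mathrm{Re}(\text{holomorphic}) = 0$, adding $-E_j\,\mathrm{Re}(H)$ to the potential changes $u_j$ by a pluriharmonic function without changing the metric, so after correcting we may assume $h = h_{aut}$. For the automorphism piece, Lemma~\ref{lem:vbeta} hands me a real holomorphic vector field $V = a_0 z\partial_z + a_{jk}x_j\partial_{x_k}$ preserving all hypersurfaces $cz + \sum x_i^2 = 0$ (hence inducing automorphisms $g$ of $\mathbf C^n$ fixing $o$), and a constant $\beta$ with $L_V\Omega = n\beta\Omega$ and $V(|z|^2 + |x|^{2\frac{n-2}{n-1}}) - \beta(|z|^2 + |x|^{2\frac{n-2}{n-1}}) = h_{aut}$, with $|\beta|, |a_0|, |a_{jk}| \le C$. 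The point is that the time-$t$ flow of $(E_j$-scaled) $V$, composed with multiplication by $(1 + E_j\beta)$, produces an admissible pair $(\beta_j, g_j)$ (item (2) holds because $L_V\Omega = n\beta\Omega$ exactly balances the scaling, giving $(\beta_j g_j^*\omega_{c_j})^n = \omega_{c_j}^n$), and $\beta_j g_j^*(\omega_{c_j} + \ddb u_j) = \omega_{c_j} + \ddb u_j'$ where $u_j' = u_j - E_j h_{aut} + (\text{error})$, with the error of size $o(E_j)$ plus $O(E_j^2)$ by the degree-$1$ homogeneity of $V$ and Taylor expansion. After also discarding the pluriharmonic part as above, I get $v_j' := E_j^{-1} u_j' \to 0$ locally uniformly on $B(0,1)\setminus\{x=0\}$, and $v_j'$ is still bounded by something like $1 + C\cdot$(coefficients) globally.

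The final step is the iteration-closing estimate. From $v_j' \to 0$ locally uniformly on compact subsets of $B(0,1/2)$ away from the singular set, together with the uniform bound and the equation, I need $\sup_{B(o,\lambda)}|u_j'| \le \tfrac12 \lambda^{2+\alpha} E_j$ for large $j$ — but this requires more than interior convergence, because $B(o,\lambda)$ meets the singular stratum of the limit cone. Here is the main obstacle: \emph{propagating the smallness of $v_j'$ across the near-singular region of $(\mathbf C^n, \omega_{c_j})$ down to scale $\lambda$}. This is exactly the phenomenon handled in \cite{Sim94, CES17} — one cannot simply pass to the limit since the rescaled balls $B(o,\lambda)$ still have positive Gromov--Hausdorff distance to the model near $\{x=0\}$. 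I would address it by proving a uniform $\sup$–$L^2$ estimate of the form $\sup_{B(o,\lambda)}|u_j'| \le C(\lambda)(\Vert u_j'\Vert_{L^2(B(o,1/2))} + \text{negligible})$ valid on $(\mathbf C^n, \omega_{c_j})$ with $C(\lambda)$ independent of $j$ — this follows from De Giorgi--Nash--Moser applied to the linearized Monge--Amp\`ere operator, whose coefficients are uniformly elliptic on the (possibly singular-looking but actually smooth) manifold $(\mathbf C^n, \omega_{c_j})$ with uniformly bounded geometry away from $o$ at scale comparable to $1$, using the Sobolev inequality under non-collapsing from \cite{And92} — combined with the fact that $\Vert v_j'\Vert_{L^2(B(o,1/2))} \to \Vert h_{aut} - h_{aut}\Vert = 0$. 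Because $h$ is harmonic and we've removed its entire $\mathcal H_{\le 2}$ content, an elementary three-annulus / frequency argument on the cone shows the $L^2$ energy of the limit (which is now $0$, or in the quantitative version, bounded by $\lambda^{2+\alpha}$ times the energy at scale $1$ for suitable $\alpha$ coming from the spectral gap above the eigenvalue corresponding to degree-$2$ harmonics); feeding this back gives the contradiction and fixes the exponent $\alpha$ as the gap between quadratic growth and the next homogeneity of harmonic functions on $C(Y)$. The choice of $\lambda_0$ is then made small enough that the constant $C(\lambda)\lambda^{2+\alpha} < \tfrac12 \lambda^{2+\alpha'}$ for the final reported exponent $\alpha' < \alpha$, absorbing the $O(E_j^2)$ error since $E_j \le \epsilon \to 0$.
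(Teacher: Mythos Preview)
Your overall architecture matches the paper's: contradiction, rescale by $E_j^{-1}$, pass to a harmonic limit $h$ on the cone via Savin-type interior estimates (Lemma~\ref{lem:Savin} and Lemma~\ref{lem:hlimit}), decompose $h$ and kill the $\mathcal{H}_{\leq 2}$ part using a pluriharmonic correction plus the automorphism from Lemma~\ref{lem:vbeta}, then exploit the spectral gap above degree $2$. Two minor points: first, $h$ is only a bounded harmonic function on the unit ball, not a global polynomial-growth function, so it does \emph{not} lie in the span of Lemma~\ref{lem:CYharmonic}; you must split $h = h^{\leq 2} + h^{>2}$ and carry the $h^{>2}$ piece through (you eventually do this in your last paragraph, but the earlier claim that ``$h$ lies in the span of the functions listed'' is wrong). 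Second, controlling the automorphism correction $e^{-\beta_j}g_j^*\phi_j - \phi_j$ \emph{globally} on $B(o_j,1)$, including near the singular set, requires knowing $|V|_{\omega_j} \leq C$ there; this is exactly Proposition~\ref{prop:vlingrowth}(b), which uses the explicit regional description of the reference metric and is not just ``degree-$1$ homogeneity of $V$'' on the cone.

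The real gap is in your final step. You propose a uniform $\sup$--$L^2$ estimate on $B(o,\lambda)$ via De Giorgi--Nash--Moser for the linearized Monge--Amp\`ere operator. But writing $(\omega + \ddb u)^n - \omega^n = 0$ as a linear second-order equation for $u$ produces coefficients built from $(\omega + t\,\ddb u)^{n-1}$, and you have no control on $\ddb u$ near the singular region $N_\theta$: Savin's estimate (Lemma~\ref{lem:Savin}) only gives $C^2$ bounds on $B(o,1-\theta)\setminus N_\theta$. So the operator is not known to be uniformly elliptic there, and Moser iteration does not close. The paper handles this by a completely different, nonlinear mechanism: it constructs an explicit barrier $v$ (Proposition~\ref{prop:barrier}), built from averages of $\phi^{-3/4}$ for Cheeger--Colding good K\"ahler potentials $\phi$ centred along the singular circle, satisfying the convex differential inequality $\sum_i \mu_i + \mu_{\max} < 0$. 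The Monge--Amp\`ere equation forces $u$ to satisfy the \emph{opposite} inequality at any interior touching point (Lemma~\ref{lem:ineq}), so a direct maximum-principle comparison (Proposition~\ref{prop:maxdecay}) gives $\sup_{B(o,1/2)}|u| \leq C\sup_{B(o,1)\setminus N_\theta}|u|$ with $C$ independent of $\theta$. This converts ``small away from the singular set'' into ``small everywhere on a smaller ball'' without any $C^2$ information near $N_\theta$. The paper then needs a second compactness pass (its Step~2, rescaling $B(o,8\lambda)$ to unit size and applying Lemma~\ref{lem:hlimit} again) to feed the $L^2$ smallness of $u'$ into a $\sup$ bound away from $N_\theta$, before invoking the barrier. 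Your single-pass scheme would need the $\sup$--$L^2$ estimate you assert, and that estimate is precisely what is missing.
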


In this work in the property (3) the constant $\lambda^2$ would
suffice, however we expect that the result has other applications that
need the better constant $\lambda^{2+\alpha}$. 
The strategy of the proof is to show that once $\epsilon$
is sufficiently small, the function $u$ is close to a harmonic function on
$C(Y)$. We then modify $u$ by subtracting a pluriharmonic function,
and applying an automorphism to get a function $u'$ with faster than
quadratic decay using Lemma~\ref{lem:vbeta}. The main difficulty
is to control the behavior of $u$ near the singular set of $C(Y)$. We
will achieve this by using the maximum principle with a suitable
barrier function in order to show that if $u$ concentrates near the
singular set, then $u$ decays rapidly when passing from $B(o,1)$ to
$B(o,1/2)$. Note that much of the argument works in more
general settings than what we are considering. The one place where we will use the
fairly explicit form of the reference metric $\omega_0$ on $\mathbf{C}^n$ is when
we need to control the action of the automorphisms on the K\"ahler
potential of $\omega$ near the singular set, using
Proposition~\ref{prop:vlingrowth}. 

To begin, we note that
on any compact set away from the singular set of $C(Y)$ we can apply
the small perturbation result of Savin~\cite{Sav07} to get regularity
of $|u|$ once $\epsilon$ is sufficiently small. For $\theta >
0$ we define the set $N_\theta$ to be the $\theta$-neighborhood of the
singular set under the Gromov-Hausdorff approximation:
\[ N_\theta = \{ (z,\mathbf{x})\,|\, 2^{1/2}|x|^{\frac{n-2}{n-1}} < \theta\}, \]
where we note that on $C(Y)$ the function $2|x|^{2\frac{n-2}{n-1}}$ is the distance
squared from the singular set under the Ricci flat cone metric. 
\begin{lem}\label{lem:Savin}
  Let $\theta > 0$. There exist $C_k> 0$ depending on $\theta$
  with the following property. 
  If, in the setting of Proposition~\ref{prop:decayest}, $\epsilon$ is
  sufficiently small (depending on $\theta$), then on
  $B(o,{1-\theta}) \setminus N_\theta$ we have
  \[  |u|_{C^k(B(o,1-\theta)\setminus N_\theta)} &< C_k\sup_{B(o,1)}|u|. \]
\end{lem}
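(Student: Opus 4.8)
The strategy is to transfer the problem to a fixed region in the tangent cone and then invoke Savin's small perturbation theorem for the homogeneous complex Monge-Amp\`ere equation in the guise of a fully nonlinear equation with smooth coefficients. First I would fix the compact set $K_\theta = \overline{B(0,1-\theta/2)}\setminus N_{\theta/2}\subset C(Y)$, which lies entirely in the regular part $C(Y)^{reg}$ where the Stenzel cone metric is smooth and the embedding $B(0,1)\hookrightarrow\mathbf{C}^{n+1}$ is a smooth submanifold. On $K_\theta$ the reference metric $\omega$ (a scaled-down copy of $\omega_0$) is, for $\epsilon$ small, uniformly $C^k$-close to the flat Stenzel cone metric $\ddb(|z|^2+|x|^{2\frac{n-2}{n-1}})$: this is exactly the smooth convergence statement recalled in Section~\ref{sec:ref} (the analysis of regions I--V, or equivalently the statement that $G$ is a Gromov-Hausdorff approximation together with the standard improvement of GH-closeness to $C^\infty$-closeness on the regular set via elliptic estimates for the Einstein equation). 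So on a fixed Euclidean-type chart covering $K_\theta$, the equation $(\omega+\ddb u)^n=\omega^n$ becomes a uniformly elliptic, concave fully nonlinear equation $F(D^2 u, x)=0$ whose coefficients are $C^k$-close (in $\epsilon$) to those of the constant-coefficient equation $\det(I+D^2u)=1$ linearized at $u=0$, and $u\equiv 0$ is a solution.

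The second step is to apply Savin's theorem~\cite{Sav07}: if $u$ solves such an equation on a unit ball, $\|u\|_{L^\infty}$ is small, and the equation is a small $C^k$-perturbation of a nice constant-coefficient one, then $u\in C^{2,\alpha}$ on the half-ball with $\|u\|_{C^{2,\alpha}}\le C\|u\|_{L^\infty}$. Here the input "$\|u\|_{L^\infty}$ small" is hypothesized, and the "small perturbation" of the coefficients is achieved by taking $\epsilon$ small, using the $C^k$-convergence of $\omega$ to the flat model on $K_\theta$ together with $\sup_{B(o,1)}|u|<\epsilon$. This yields the bound $|u|_{C^{2,\alpha}}\le C\sup_{B(o,1)}|u|$ on a slightly smaller set, say $B(o,1-\theta)\setminus N_\theta$ (after shrinking, and covering $B(o,1-\theta)\setminus N_\theta$ by finitely many such balls, whose number depends on $\theta$).

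The third step upgrades $C^{2,\alpha}$ to $C^k$ for all $k$. Once $u$ is bounded in $C^{2,\alpha}$, the coefficients of the linearized equation $a^{i\bar j}(x)\partial_i\partial_{\bar j}(\text{increments of }u)=\ldots$ are controlled, and one bootstraps using interior Schauder estimates for the Monge-Amp\`ere equation (Caffarelli's $W^{2,p}$ and $C^{2,\alpha}$ theory, then standard elliptic regularity since the equation has smooth coefficients on $K_\theta$): differentiating the equation, each derivative of $u$ satisfies a linear elliptic equation with coefficients bounded in terms of the previously established norms of $u$, and the smoothness of $\omega$ on $K_\theta$ lets the induction continue. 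This gives $|u|_{C^k}\le C_k\sup_{B(o,1)}|u|$ with $C_k$ depending on $k$, $\theta$, and the fixed geometry of $(C(Y),\omega)$ near $K_\theta$. Re-centering: since the scaled-down metric $\omega$ on $B(o,1)$ is uniformly equivalent to the flat metric on $K_\theta$ for small $\epsilon$, the $C^k$-norms computed in the Euclidean chart and with respect to $\omega$ differ only by fixed constants.

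The main obstacle is the first step: making precise the passage from Gromov-Hausdorff closeness of $(B(o,1),\omega)$ to $(B(0,1),C(Y))$ to genuine $C^k$-closeness of the metric $\omega$ to the flat Stenzel model on the fixed regular region $K_\theta$, uniformly as $\epsilon\to 0$. This is where one really uses that $\omega$ is not an arbitrary nearby metric but a fixed scaled copy of $\omega_0$, so its behavior on each region I--V is explicitly understood from \cite{Sz17}; alternatively one appeals to Anderson-type $\epsilon$-regularity for Einstein metrics (the bounds on the Sobolev constant and Ricci curvature discussed in the Donaldson-Sun subsection give harmonic-coordinate control), which promotes GH-convergence to $C^{1,\alpha}$- and then $C^\infty$-convergence away from the singular set. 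Everything after that is a routine, if lengthy, application of Savin's theorem and Monge-Amp\`ere Schauder bootstrapping.
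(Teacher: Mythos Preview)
Your proposal is correct and follows essentially the same route as the paper: control the geometry on $B(o,1-\theta)\setminus N_\theta$ via Anderson's $\epsilon$-regularity (harmonic radius bounds), apply Savin's small perturbation theorem there, and bootstrap with Schauder estimates. The only place where the paper is slightly more explicit is in extracting the \emph{linear} bound $|u|_{C^{2,\alpha}}\le C\sup|u|$: Savin's theorem by itself gives a uniform (small) $C^{2,\alpha}$ bound, and the paper then rewrites $(\omega+\ddb u)^n=\omega^n$ in the factored form
\[
\Big[n\omega^{n-1}+\binom{n}{2}\omega^{n-2}\wedge\ddb u+\ldots+(\ddb u)^{n-1}\Big]\wedge\ddb u=0,
\]
viewing this as a linear homogeneous elliptic equation for $u$ with $C^\alpha$ coefficients (now controlled), so that Schauder gives the bound proportional to $\sup|u|$; your Step~2 states this linear bound as coming directly from Savin, which is a mild overstatement, but your Step~3 contains exactly the ingredients needed to recover it.
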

\begin{proof}
  Note that by the Cheeger-Colding theory~\cite{CC97} and Anderson's
  epsilon regularity result~\cite{An90}, we can bound the harmonic
  radius of $\omega$ on
  $B(o,1-\theta)\setminus N_\theta$ once $\epsilon$ is sufficiently
  small. Then for any $\delta > 0$, if $\sup_{B(o,1)}|u|$ is sufficiently small, then we can
  apply Savin's result~\cite{Sav07} in harmonic coordinates to obtain
  $|u|_{C^3(B(o,1-\theta/2))} < \delta$. We have the equation
  \[ (\omega + \ddb u)^n = \omega^n, \]
  which we can write as
  \[ \label{eq:omegaueqn}
    \left[n\omega^{n-1} + \binom{n}{2}\omega^{n-2}\wedge\ddb u +\ldots +
      (\ddb u)^{n-1}\right]  \wedge \ddb u =
    0. \]
  If $\delta$ is sufficiently small (so that $\ddb u$ is small), then we
  can view this as a uniformly elliptic homogeneous equation for $u$
  with $C^\alpha$ coefficients (the expression in square brackets
  determining the coefficients of the equation). The Schauder
  estimates imply that on $B(o,1-3\theta/4)$ we have the bound
  $|u|_{C^{2,\alpha}} \leq C_2 \sup_{B(o,1)} |u|$. We can bootstrap
  this estimate to obtain the required higher order bounds. 
\end{proof}

We will apply this result in the following form several times.
\begin{lem}\label{lem:hlimit}
  Suppose that we have a sequence of functions $u_i$ as in
  Proposition~\ref{prop:decayest} such that $\epsilon_i \to
  0$. Let $v_i = (\sup_{B(o_i,1)}|u_i|)^{-1}u_i$. Then there is a
  (possibly vanishing)
  bounded harmonic function $h$ on $B(0,1)\subset C(Y)$ such that under the
  Gromov-Hausdorff convergence, for a subsequence,
 we have $v_i \to h$ in $C^\infty$
  uniformly on $B(o_i,1-\theta)\setminus N_\theta$ for any $\theta >
  0$. 
\end{lem}
\begin{proof}
  By Lemma~\ref{lem:Savin}, for any given $\theta > 0$ the functions
  $v_i$ satisfy uniform $C^\infty$ bounds on $B(o_i,
  1-\theta)\setminus N_\theta$ for
  sufficiently large $i$, and they are all bounded by 1 on
  $B(o_i,1)$. 
  By a diagonal argument we can find a
  subsequence such that $v_i \to h$ for a function $h$ on
  $B(0,1)\subset C(Y)$, with the convergence taking place in
  $C^\infty$ on any $B(o_i, 1-\theta)\setminus N_\theta$.

  To see that $h$ is harmonic, note that just like the equation
  \eqref{eq:omegaueqn}, $v_i$ satisfies
  \[ \left[n\omega_i^{n-1} + \binom{n}{2}\omega_i^{n-2}\wedge \ddb u_i
      +\ldots + (\ddb u_i)^{n-2}\right] \wedge 
    \ddb v_i=0. \]
  By Lemma~\ref{lem:Savin}, on $B(o_i, 1-\theta)\setminus N_\theta$ we
  have $u_i\to 0$ in $C^\infty$, and so passing to the limit in these
  equations we get that $h$ is harmonic on  $B(0,1-\theta)\setminus
  N_\theta$. This holds for any $\theta$, and so $h$ is harmonic on
  the regular part of $B(0,1)$. In addition $h$ is bounded, so it is
  harmonic in a weak sense across the singular set too. 
\end{proof}

\subsection{Construction of a barrier function}
We suppose that we are in the setting of
Proposition~\ref{prop:decayest}. The following provides the barrier
function used in the maximum principle argument below. 
\begin{prop}\label{prop:barrier}
  There is a constant $D > 0$ with the following property. Let $\theta
  > 0$. There is a constant $C_\theta > 0$ such that if
  $\epsilon$ is sufficiently small (depending
  on $\theta$), then there is a smooth real function $v$ on $B(o,1)$
  satisfying the following properties: 
  \begin{enumerate}
  \item $|\partial\bar\partial v|_{\omega} < C_\theta$ on
    $B(o,1-\theta/2)$, 
    \item $v(q) > D^{-1}\theta^{-1/2}$ whenever $q\in
      N_\theta \cap \partial B(o,1-\theta)$.
    \item $v > D^{-1}$ on $B(o,1)$, and $v < D$ on $B(o,1/2)$.
    \item On $B(o,1-\theta/2)$ the function $v$ satisfies the
      differential inequality
      \[ \sum_i \mu_i + \mu_{max} < 0, \]
      where the $\mu_i$ are the eigenvalues of $\ddb v$ relative to
      $\omega$, and $\mu_{\max}$ is the largest eigenvalue.  
  \end{enumerate}
\end{prop}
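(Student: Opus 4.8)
The plan is to build $v$ as an explicit function of the approximately-defined distance to the singular set, transplanted to $B(o,1)$ via the Gromov–Hausdorff map. On the cone $C(Y)=\mathbf{C}\times A_1$ the distance squared to the singular set is $s^2 = 2|x|^{2\frac{n-2}{n-1}}$, and the cross-section of $A_1$ in the direction transverse to its singular stratum is (a cone over) a lower-dimensional link; the key point is that near the singular set the relevant ``transverse Laplacian'' acting on powers $s^{-\gamma}$ produces a term with a definite sign. So first I would compute, on the regular part of $C(Y)$, the quantity $\Delta(s^{-\gamma})$ and more precisely the full Hessian eigenvalues of $\partial\bar\partial(s^{-\gamma})$ relative to the Stenzel cone metric, and check that for a suitable small exponent $\gamma>0$ (depending only on $n$) one has $\sum_i\mu_i+\mu_{\max}<0$ on the region $s<1$ say, with the negativity coming from the fact that $-\gamma(\text{something}-\gamma)<0$ dominates the bounded contribution of the slowly varying part. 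This is a computation on the smooth model and is where the specific geometry of the Stenzel cone enters; I expect $s^{-\gamma}$ to work for all $\gamma$ in an interval $(0,\gamma_0(n))$.

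Next, having a model barrier $v_0 = A s^{-\gamma} + (\text{a fixed smooth cutoff-type correction})$ on $C(Y)$ satisfying the strict inequality (4) on a neighborhood of the singular set, together with the normalization that it is $>D^{-1}$ everywhere and bounded above on $B(0,1/2)$, I would transplant it to $(\mathbf{C}^n,\omega)$. The function $2^{1/2}|x|^{\frac{n-2}{n-1}}$ makes sense as a function on $\mathbf{C}^n$ via the embedding $F_c$, and by Proposition~\ref{prop:goodembed}/the Gromov–Hausdorff approximation it is uniformly close (in $C^\infty_{loc}$ on the regular region, say on $B(o,1-\theta/2)\setminus N_{\theta/4}$) to the model function $s$ on $C(Y)$. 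Then on $B(o,1-\theta/2)\setminus N_{\theta/2}$ the pulled-back metric $\omega$ is $C^\infty$-close to the cone metric, so the eigenvalue inequality (4) passes over for $\epsilon$ small, at the cost of shrinking $\gamma$ slightly and absorbing errors; property (1), the bound $|\partial\bar\partial v|_\omega<C_\theta$, follows on $B(o,1-\theta/2)$ because there $s\geq c\theta$ so $s^{-\gamma}$ and its derivatives are bounded in terms of $\theta$. Property (2) is arranged by the choice of the constant $A$ in front of $s^{-\gamma}$: on $N_\theta\cap\partial B(o,1-\theta)$ we have $s<\theta$ hence $s^{-\gamma}>\theta^{-\gamma}$, and after renormalizing $\gamma$ vs.\ the $\theta^{-1/2}$ in the statement one sees $\theta^{-\gamma}\geq \theta^{-1/2}$ for $\theta$ small once $\gamma\geq 1/2$ — so actually I want $\gamma$ close to but not below $1/2$; I would check the model computation does allow $\gamma=1/2$ (or simply weaken the exponent in (2), which is all that is used later), and then $D$ is the universal constant controlling $A$ and the upper/lower bounds in (3).

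The deep region, inside $N_{\theta/2}$, is the place where I cannot use the smooth approximation, and this is the main obstacle. Here I would not try to control $\partial\bar\partial v$ pointwise — indeed (1) is only claimed on $B(o,1-\theta/2)$, which still contains a lot of $N_\theta$, but the differential inequality (4) is also only claimed on $B(o,1-\theta/2)$. Wait: reread — (4) is on $B(o,1-\theta/2)$, which does include a neighborhood of the singular set out to radius $1-\theta/2$. So in fact what saves the argument is that $s\leq 2^{-1/2}$ throughout $B(o,1)\subset C(Y)$ (since $s$ is bounded on the unit ball), and the model inequality holds on the \emph{whole} region $s<1$, uniformly, not just near $s=0$; then the transplantation only needs the metric comparison on $B(o,1-\theta/2)$, and on that set the function $s$ stays bounded below away from $0$ only on $B(o,1-\theta/2)\setminus N_{\theta/2}$ — so for the part of $B(o,1-\theta/2)$ lying inside $N_{\theta/2}$ I still need (4) and (1). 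The resolution, and the real technical heart, is that $v$ should be defined \emph{intrinsically} as a function of the distance $\mathrm{dist}_\omega(\cdot,\mathcal{S})$ to the (metric) singular stratum of the limit, using that by Cheeger–Colding the singular set has codimension $\geq 4$ and the metric near it is controlled; one builds $v = \Phi(\mathrm{dist}(\cdot,\mathcal{S}))$ and uses the Laplacian comparison / cut-locus estimates for the distance to a lower-dimensional set together with the lower Ricci bound ($\mathrm{Ric}=0$ here) to get the viscosity inequality $\Delta v < -\delta$, upgraded to the stronger $\sum\mu_i+\mu_{\max}<0$ using the K\"ahler structure and the fact that the largest eigenvalue of $\partial\bar\partial v$ is itself controlled by $|\Phi''|$ which we make as negative as we like by choosing $\Phi(t)=At^{-\gamma}$. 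Property (1) then holds on $B(o,1-\theta/2)$ because... it does not, unless we interpret $N_\theta$ as cutting that region out — so I would, in writing this up, take $v$ to be $\Phi(s)$ modified by gluing in a genuinely smooth extension across the singular set where $s<\theta/10$ (possible since $\Phi(s)\to\infty$ there, but we cap it: set $v=\min(\Phi(s),\Phi(\theta/10))$ smoothed), which kills (1)'s problem at the cost of only affecting (4) deep inside $N_{\theta/10}\subset N_{\theta/2}$, exactly where we don't need (4) for the later maximum principle because the barrier is already huge there. In short: the architecture is model computation $\Rightarrow$ transplant on the regular region $\Rightarrow$ cap/smooth near the stratum, and the one genuinely hard estimate is the signed Laplacian/Hessian bound for a power of the transverse distance, which I would do first on the smooth Stenzel cone and then argue persists under $C^\infty$-close perturbation of the metric.
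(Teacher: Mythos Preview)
Your proposal has a genuine gap, and it is precisely the part you flagged as ``the main obstacle'' and then talked yourself around. The differential inequality (4) and the Hessian bound (1) are both required on all of $B(o,1-\theta/2)$, which contains most of the tube $N_\theta$. Your barrier is built by computing $\partial\bar\partial(s^{-\gamma})$ on the model cone and then transplanting; but inside $N_\theta$ the metric $\omega$ is \emph{not} $C^\infty$-close to the cone metric (that is the whole point --- the cone is singular there while $\omega$ is smooth), so the model eigenvalue computation does not survive the transplant in that region. Your fallback of capping $v$ near the stratum makes $\partial\bar\partial v\approx 0$ on the capped set, so $\sum\mu_i+\mu_{\max}\approx 0$ there and (4) fails. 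And your final suggestion, to use $\mathrm{dist}_\omega(\cdot,\mathcal S)$ for the metric singular stratum and invoke Laplacian comparison, does not apply: $(\mathbf{C}^n,\omega)$ is a smooth Ricci-flat manifold with no singular stratum at all.

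The paper resolves this by a different construction that avoids any comparison with the cone metric inside $N_\theta$. Instead of the transverse distance $s$, one takes a point $q$ which is $\epsilon$-close to a cone vertex $(z_0,0)$ with $|z_0|=1$ (so $q$ lies just outside $B(o,1-\theta/2)$), and uses the \emph{K\"ahler potential} $\phi$ with $\sqrt{-1}\partial\bar\partial\phi=\omega$ furnished by \cite[Proposition~3.1]{LSz18}, satisfying $|\phi-\tfrac12 d(q,\cdot)^2|<\Psi(\epsilon)$. Because $\sqrt{-1}\partial\bar\partial\phi=\omega$ holds \emph{exactly}, the Hessian of $\phi^{-3/4}$ can be computed directly, and the only extra input needed is the gradient bound $|\nabla^{1,0}\phi|^2\le\phi+\Psi(\epsilon)$, obtained from Cheeger--Colding plus the Bochner formula; this yields (4) on all of $B(o,1-\theta/2)$, with no reference to the cone model. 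Since $q$ sits at distance $\approx 1$ from $o$, one has $\phi\gtrsim\theta^2$ on $B(o,1-\theta/2)$, giving (1). A single such $\phi^{-3/4}$ is only large near $q$, so to get (2) one averages $\phi_i^{-3/4}$ over $K\sim\theta^{-1}$ points $q_i$ spread along the singular circle; convexity of $\sum\mu_i+\mu_{\max}$ preserves (4) under averaging, and the $1/K$ factor turns the local $\theta^{-3/2}$ spike into the global $\theta^{-1/2}$ lower bound. The point you were missing is that the barrier should be a function of the distance to carefully chosen \emph{points} near the singular ray (realised via an exact K\"ahler potential), not of the distance to the ray itself.
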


\begin{proof}
Let $(z,0)$ be a point in the singular set of $C(Y)$, with $|z|=1$,
and let $q\in B(o,2)$ be within $\epsilon$ of $(z,0)$ under the
Gromov-Hausdorff approximation. Using that $C(Y)$ is
a cone also when centered at $(z,0)$, once $\epsilon$ is sufficiently
small we can 
apply \cite[Proposition 3.1]{LSz18} to find a good K\"ahler
potential $\phi$ for $\omega$ on $B(q,3)$, in the sense that we have
$\omega = \ddb \phi$, while also
\[ \label{eq:phid} | \phi - d(q,\cdot)^2/2| < \Psi(\epsilon) \]
on $B(q,3)$. By adding a constant of order $\Psi(\epsilon)$
we can assume that $\phi > 0$. We
have $\Delta \phi = n$ (using the complex Laplacian),
and from the Cheeger-Colding
estimate~\cite{CC96} together with the Cheng-Yau gradient
estimate~\cite{CY75} we have
\[ \int_{B(q,2)} \Big| |\nabla^{1,0}\phi|^2 - \phi\Big|^2 <
  \Psi(\epsilon), \]
where we emphasize that we are taking the $(1,0)$-part of the
derivative of $\phi$. At the same time from the Bochner formula
(computing in normal coordinates)
\[ \Delta |\nabla^{1,0}\phi|^2 = (\phi_i \phi_{\bar i})_{j\bar j} =
  \phi_{ij}\overline{\phi_{ij}} + \phi_{i\bar j}\overline{\phi_{i\bar
      j}} \geq n, \]
since $\phi_{i\bar j} = \omega_{i\bar j}$. So
\[ \Delta \Big( |\nabla^{1,0}\phi|^2 - \phi\Big) \geq 0, \]
and the mean value inequality implies that
\[ |\nabla^{1,0}\phi|^2 \leq \phi + \Psi(\epsilon). \]

Let us now consider the function $\phi^{-3/4}$. We have
\[ \ddb \phi^{-3/4} = -\frac{3}{4}\phi^{-7/4} \ddb\phi + \frac{21}{16}
  \phi^{-11/4} \sqrt{-1}\partial\phi\wedge\bar\partial\phi. \]
Fix $y\in B(o,1)$. We can choose orthonormal coordinates for $\omega$
at $y$ such that $\partial \phi = \phi_1 dz^1$, and $\ddb\phi$
is the identity matrix. By the estimate above, we have $|\phi_1|^2\leq
\phi + \Psi(\epsilon)$. The eigenvalues of $\ddb \phi^{-3/4}$
therefore satisfy
\[ -\frac{3}{4}\phi^{-7/4} \leq \mu_1 &\leq -\frac{3}{4}\phi^{-7/4} +
  \frac{21}{16}\phi^{-11/4}(\phi + 
  \Psi(\epsilon)),  \\
  \mu_2,\ldots, \mu_n &= -\frac{3}{4}\phi^{-7/4}. \]
The maximum eigenvalue is necessarily $\mu_1$, and so
\[ \label{eq:supersol}
  \sum \mu_i + \mu_{max} = 2\mu_1 + (n-1)\mu_2 &\leq 2\mu_1 + 2\mu_2 \\
  &\leq -3\phi^{-7/4} +
  \frac{21}{8}\phi^{-11/4}(\phi + \Psi(\epsilon)) \\
  &= \phi^{-11/4}\left( -\frac{3}{8}\phi + \frac{21}{8}\Psi(\epsilon)\right). \]
As long as $\phi > 7\Psi(\epsilon)$, we obtain
\[ \label{eq:diffineq1} \sum \mu_i + \mu_{max}  < 0. \]
In particular by \eqref{eq:phid} this holds if $y\in B(o,1-\theta)$, and $\epsilon$ is
sufficiently small (depending on $\theta$). 

We also have $\phi > \theta^2/4$ on $B(o,1-\theta)$ once $\epsilon$ is
sufficiently small, and so from the
bounds for the eigenvalues we have $|\ddb\phi^{-3/4}| < C_\theta$ on
$B(o,1-\theta)$ (where $C_\theta$ is of order $\theta^{-7/2}$,
although we do not need this). On $B(o,1/2)$ we have $\phi > 1/10$ once
$\epsilon$ is sufficiently small, and this leads to an upper bound
$\phi^{-3/4} < 10^{3/4}$ on $B(o,1/2)$. At the same time $\phi < 1$ on
$B(o,1)$ for sufficiently small $\epsilon$, and so $\phi^{-3/4} > 1$
on $B(o,1)$. 

Note that once $\epsilon$ is sufficiently small, we have $\phi <
16\theta^2$ on $B(q,4\theta)$, and so 
\[ \phi^{-3/4} > 16^{-3/4}\theta^{-3/2} \text{ on } B(q, 4\theta). \]
This means $\phi^{-3/4}$ is large near $q$,
but we want a function that is large at all points in
$N_\theta\cap \partial B(o,1-\theta)$. To achieve this, we define $v$
to be an average of functions $\phi^{-3/4}$ constructed for different
points in $S^1\times \{0\} \in \mathbf{C}\times
A_1$. For a given $\theta > 0$, we pick
$z_1,\ldots, z_K$ on the unit circle, and $q_i\in B(o,2)$ which are
$\epsilon$-close to $(z_i, 0)\in C(Y)$ under our Gromov-Hausdorff
approximation, so that the $4\theta$-balls $B(q_i, 4\theta)$ cover
$N_\theta\cap \partial B(o, 1-\theta)$. For sufficiently small
$\epsilon$ we can achieve this with $K < c\theta^{-1}$, for a uniform
$c$. We consider the functions $\phi_i^{-3/4}$ constructed as
above, based at the points $q_i$, and define
\[ v = \frac{1}{K} \sum_{i=1}^K \phi_i^{-3/4}. \]
Then $v$ satisfies the required properties:
\begin{enumerate}
\item $|\ddb v|_\omega < C_\theta$,  since we are taking an average of
  functions that satisfy this estimate. 
\item If $q\in N_\theta\cap B(o,1-\theta)$, then by assumption, there
  is a $q_i$ such that $q\in B(q_i, 4\theta)$. It follows that 
\[ v(q) > \frac{1}{K}\phi_i^{-3/4}(q) > c^{-1}\theta\cdot
  16^{-3/4}\theta^{-3/2} = c^{-1}16^{-3/4} \theta^{-1/2}, \]
which gives the required lower bound. 
\item We have $\phi_i^{-3/4} > 1$ on $B(o,1)$ for all $i$, so $v$
  satisfies the same estimate. Similarly, $\phi_i^{-3/4} < 10^{3/4}$
  on $B(o,1/2)$, and so $v$ satisfies the same.
\item Each $\phi_i^{-3/4}$ satisfies the required differential
  inequality \eqref{eq:diffineq1} on $B(o,1-\theta/2)$, and the expression $\sum_i \mu_i +
  \mu_{max}$ is convex on the space of Hermitian matrices. Therefore
  $v$ satisfies the same differential inequality. 
\end{enumerate}
\end{proof}

Note that this result can easily be generalized to other cones of the form $C(Y) =
  \mathbf{C}^k\times C(Y')$, where $Y'$ has an isolated
  singularity, but we have crucially used that all singular
  points in $C(Y)$ can be taken to be a vertex of $C(Y)$. We still
  expect that with some additional work a similar barrier function can
  be constructed for more general cones.

We now use the maximum principle to obtain the following important
decay property.
\begin{prop}\label{prop:maxdecay}
  There is a constant $C > 0$ with the following property. Let $A
  > 10$. There exists $\theta > 0$ depending on $A$, such that if in
  the setting of Proposition~\ref{prop:decayest}
  $\epsilon$ is sufficiently small (depending on $A,\theta$),
  and
  \[ \sup_{B(o,1)} |u| \leq A \sup_{B(o,1)\setminus N_\theta} |u|, \]
  then
  \[ \sup_{B(o,1/2)}|u| \leq C\sup_{B(o,1)\setminus N_\theta} |u|. \]
  Note that $C$ does not depend on $A,\theta$. 
\end{prop}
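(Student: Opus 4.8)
The plan is to run a comparison (maximum) principle argument for the complex Monge-Amp\`ere operator, using the barrier $v$ of Proposition~\ref{prop:barrier}: since $v$ is large (of size $\theta^{-1/2}$) on $N_\theta\cap\partial B(o,1-\theta)$ but bounded by the universal constant $D$ on $B(o,1/2)$, comparing $u$ against a suitable multiple of $v$ replaces the factor $A$ coming from the bad region $N_\theta$ by a universal constant. Write $M=\sup_{B(o,1)}|u|$ and $M_\theta=\sup_{B(o,1)\setminus N_\theta}|u|$, so the hypothesis reads $M\le AM_\theta$; I will prove the conclusion with $C=2$ (any fixed constant $>1$ works), which visibly does not depend on $A$ or $\theta$. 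If $M_\theta=0$ the hypothesis forces $u\equiv 0$, so assume $M_\theta>0$. First I would fix the parameters: given $A$, set $\theta=\big((A-1)D^{2}\big)^{-2}$, shrunk if necessary so that $\theta<1/2$ (recall $D>1$); this depends only on $A$. Then choose $\epsilon$ small (depending on $A,\theta$) so that Lemma~\ref{lem:Savin} and Proposition~\ref{prop:barrier} apply, producing $v$ on $B(o,1)$ with its four properties, and so small that the error terms below that tend to $0$ with $\epsilon$ are as small as needed. Finally set $\delta=(A-1)D\theta^{1/2}M_\theta$, so that $(A-1)D\theta^{1/2}M_\theta=\delta\le D^{-1}M_\theta$ by the choice of $\theta$, and $\delta<\epsilon<1/C_\theta$ since $M_\theta\le M<\epsilon$. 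Throughout I use that, in the setting of Proposition~\ref{prop:decayest}, $\omega+\ddb u$ is a genuine K\"ahler metric, which is what makes the Monge-Amp\`ere comparison principle available precisely in $N_\theta$, where Lemma~\ref{lem:Savin} gives no control on $\ddb u$.

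The core step is to show that $\tilde u:=M_\theta+\delta v$ is a strict Monge-Amp\`ere supersolution on $\Omega:=B(o,1-\theta)$ which dominates $u$ on $\partial\Omega$. Let $\mu_1,\dots,\mu_n$ be the eigenvalues of $\ddb v$ relative to $\omega$; by property (1) of the barrier $|\mu_i|<C_\theta$ on $B(o,1-\theta/2)\supset\overline\Omega$, so $\omega+\delta\ddb v>0$ for $\delta<1/C_\theta$. Moreover I claim that in fact $\sum_i\mu_i\le -c_0$ on $B(o,1-\theta)$ for a universal $c_0=c_0(n)>0$: this is immediate from the proof of Proposition~\ref{prop:barrier}, since the eigenvalue bounds there (see \eqref{eq:supersol} and the explicit eigenvalues of $\phi_i^{-3/4}$) give $\Delta_\omega\phi_i^{-3/4}\le -c_0$ once $\phi_i$ is bounded below by a fixed multiple of $\theta^2$ and $\epsilon$ is small, and $v$ is an average of such functions. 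Hence $(\omega+\ddb\tilde u)^n/\omega^n=\prod_i(1+\delta\mu_i)\le e^{\delta\sum_i\mu_i}\le e^{-\delta c_0}<1=(\omega+\ddb u)^n/\omega^n$ on $\Omega$. For the boundary comparison: if $q\in\partial\Omega\setminus N_\theta$ then $u(q)\le M_\theta<\tilde u(q)$ since $v>0$; if $q\in\partial\Omega\cap N_\theta$ then $u(q)\le M\le AM_\theta$, while property (2) and the choice of $\delta$ give $\tilde u(q)> M_\theta+\delta D^{-1}\theta^{-1/2}=M_\theta+(A-1)M_\theta=AM_\theta$; so $u\le\tilde u$ on $\partial\Omega$. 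If $u-\tilde u$ had an interior maximum at $p_0\in\Omega$, then $\ddb(u-\tilde u)(p_0)\le 0$, hence $0<\omega+\ddb u(p_0)\le\omega+\ddb\tilde u(p_0)$, so $(\omega+\ddb u)^n(p_0)\le(\omega+\ddb\tilde u)^n(p_0)$ by monotonicity of $\det$ on the positive cone, contradicting the strict inequality above. Therefore $u\le\tilde u$ on $\Omega$, and on $B(o,1/2)\subset\Omega$, using $v<D$ and $\delta\le D^{-1}M_\theta$, we get $u\le M_\theta+\delta v<M_\theta+\delta D\le 2M_\theta$.

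The matching lower bound follows from the symmetric argument with $\tilde u':=-M_\theta-\delta v$: its eigenvalues relative to $\omega$ are $-\delta\mu_i$, so $(\omega+\ddb\tilde u')^n/\omega^n=\prod_i(1-\delta\mu_i)\ge 1+\delta c_0-C_n\delta^2C_\theta^2>1$ once $\epsilon$ (hence $\delta$ relative to $C_\theta$) is small enough, so $\tilde u'$ is a strict subsolution on $\Omega$; the same two-case check gives $\tilde u'\le u$ on $\partial\Omega$; and if $\tilde u'-u$ had an interior maximum at $p_0$ then $\omega+\ddb\tilde u'(p_0)\le\omega+\ddb u(p_0)$, so $(\omega+\ddb\tilde u')^n(p_0)\le\omega^n(p_0)$, contradicting the strict subsolution inequality. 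Hence $\tilde u'\le u$ on $\Omega$, so on $B(o,1/2)$ we get $u\ge -M_\theta-\delta D\ge -2M_\theta$. Combining, $|u|\le 2M_\theta=CM_\theta$ on $B(o,1/2)$, with $C=2$ independent of $A$ and $\theta$.

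The main obstacle---indeed essentially the only delicate point---is the \emph{quantitative} superharmonicity of the barrier: one must know $\sum_i\mu_i\le -c_0$ with $c_0$ independent of $\theta$, rather than merely $\sum_i\mu_i<0$, because $\delta$ is forced to be comparable to $M_\theta$ and hence possibly tiny, while the perturbation appearing in $\prod_i(1\pm\delta\mu_i)$ involves $C_\theta$, which is large (depending on $\theta$). This is exactly why the order of quantification matters: one fixes $A$, then $\theta$, then $\delta$ in terms of $M_\theta$, and only then chooses $\epsilon$ small enough that $\delta$ is negligible compared with $C_\theta$. Everything else is a routine application of the comparison principle; the hypothesis that $\omega+\ddb u$ is K\"ahler on all of $B(o,1)$ is used only to legitimize running that principle through the region $N_\theta$, where no estimate on $\ddb u$ is available.
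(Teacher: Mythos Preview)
Your argument is correct, but the mechanism differs from the paper's. You run the standard comparison principle for the complex Monge--Amp\`ere operator: since $\prod(1+\delta\mu_i)\le e^{\delta\sum\mu_i}<1$ you have $(\omega+\delta\,\ddb v)^n<\omega^n=(\omega+\ddb u)^n$, and determinant monotonicity on the positive cone rules out an interior touching point. For the matching lower bound you need the \emph{quantitative} estimate $\Delta_\omega v\le -c_0$ with $c_0$ independent of $\theta$, which is not part of the statement of Proposition~\ref{prop:barrier} but, as you note, can be read off its proof: the explicit eigenvalues there give $\Delta\phi_i^{-3/4}\le\big(\tfrac{9}{16}-\tfrac{3(n-1)}{4}\big)\phi_i^{-7/4}+O(\phi_i^{-11/4}\Psi(\epsilon))$, the universal upper bound $\phi_i\lesssim 1$ on $B(o,1-\theta)$ makes the main term $\le -c_0(n)$, and the $\theta$--dependent lower bound $\phi_i\gtrsim\theta^2$ is only needed to absorb the error. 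The paper proceeds differently: it sets $\tilde v=\Lambda^{-1}v$ with $\Lambda^{-1}D^{-1}=M_\theta$ and uses property~(4) of the barrier ($\sum\mu_i+\mu_{\max}<0$) together with Lemma~\ref{lem:ineq}. At a touching point the eigenvalues of $\ddb u$ are bounded above by $\Lambda^{-1}C_\theta<\delta_0$, so the lemma forces $\sum\mu_i(\ddb u)+\mu_{\max}(\ddb u)\ge 0$; since $\ddb u\le\ddb\tilde v$ and $A\mapsto\mathrm{tr}\,A+\lambda_{\max}(A)$ is monotone, this contradicts property~(4) for $\tilde v$. The paper's route uses exactly the stated barrier properties and needs no quantitative constant, yielding $C=D^2$; yours is more elementary in that it sidesteps Lemma~\ref{lem:ineq} and the nonlinear property~(4) entirely, at the price of extracting an unstated bound from the barrier construction. (A small remark on your closing discussion: for the \emph{upper} bound the bare inequality $\sum\mu_i<0$ already suffices via $\prod(1+\delta\mu_i)\le e^{\delta\sum\mu_i}$; it is only the lower bound that genuinely requires the uniform $c_0$.)
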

We first need the following lemma.
\begin{lem}\label{lem:ineq}
  Suppose that $\mu_i > -1$ are constants for $i=1,\ldots,n$ such that
  \[\prod_{i=1}^n (1+\mu_i) = 1. \]
  There is a $\delta_0 > 0$ depending
  only on $n$, such that if $\mu_i < \delta_0$ for all $i$, then
  \[ \sum_{i=1}^n \mu_i + \mu_{max} \geq 0, \]
  while if $\mu_i > -\delta_0$ for all $i$, then 
  \[ \sum_{i=1}^n \mu_i + \mu_{min} \leq 0. \]
  Here $\mu_{max}, \mu_{min}$ are the largest and smallest
  of the $\mu_i$. 
\end{lem}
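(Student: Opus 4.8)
The plan is to pass to the logarithmic variables $\nu_i = \log(1+\mu_i)$, which are well-defined since each $\mu_i > -1$. Under this substitution the multiplicative constraint $\prod_{i=1}^n(1+\mu_i)=1$ becomes the linear relation $\sum_{i=1}^n\nu_i=0$, and $\mu_i=e^{\nu_i}-1$. First I would record the elementary one-variable facts: $e^t-1\geq t$ for every $t$, and there is a universal $\eta_0>0$ with $\tfrac13 t^2\leq e^t-1-t\leq t^2$ whenever $|t|\leq\eta_0$. The key structural point is that although the hypotheses bound the $\mu_i$ only from one side, the relation $\sum_i\nu_i=0$ upgrades this to a two-sided bound on the $\nu_i$: if $\mu_i<\delta_0$ for all $i$, then $\nu_i\leq\mu_i<\delta_0$ and hence $\nu_i=-\sum_{j\neq i}\nu_j>-(n-1)\delta_0$; likewise if $\mu_i>-\delta_0$ for all $i$, then (for $\delta_0<\tfrac12$) $\nu_i>\log(1-\delta_0)>-2\delta_0$ and so $\nu_i<2(n-1)\delta_0$. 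Choosing $\delta_0=\delta_0(n)$ small enough that $2(n-1)\delta_0\leq\eta_0$ makes the quadratic estimates applicable to every $\nu_i$ in either regime. (If all $\nu_i=0$ both assertions are trivial, so we may assume not.)

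For the first inequality: from $e^t-1\geq t$ we get $\sum_i\mu_i=\sum_i(e^{\nu_i}-1)\geq\sum_i\nu_i=0$. Since $\sum_i\nu_i=0$, the largest $\nu_i$ is nonnegative, so $\mu_{max}=e^{\nu_{max}}-1\geq 0$. Adding gives $\sum_i\mu_i+\mu_{max}\geq 0$. (This half in fact needs no smallness, and $\sum_i\mu_i\geq 0$ is just the AM--GM inequality applied to the numbers $1+\mu_i$.)

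The second inequality is where the smallness of $\delta_0$ genuinely enters, and it is the only step requiring care. Using the upper Taylor estimate, $\sum_i\mu_i=\sum_i(e^{\nu_i}-1-\nu_i)\leq\sum_i\nu_i^2$, and similarly $\mu_{min}=e^{\nu_{min}}-1\leq\nu_{min}+\nu_{min}^2$. Hence $\sum_i\mu_i+\mu_{min}\leq 2\sum_i\nu_i^2+\nu_{min}$, using $\nu_{min}^2\leq\sum_i\nu_i^2$. It remains to show $2\sum_i\nu_i^2\leq-\nu_{min}$. Writing $m=\max_i|\nu_i|\leq 2(n-1)\delta_0$, and noting that $\sum_i|\nu_i|=2\sum_{\nu_j>0}\nu_j=2\sum_{\nu_j<0}|\nu_j|\leq 2(n-1)|\nu_{min}|$ (because $\sum_i\nu_i=0$ and at most $n-1$ of the $\nu_i$ are negative), we obtain $\sum_i\nu_i^2\leq m\sum_i|\nu_i|\leq 4(n-1)^2\delta_0\,|\nu_{min}|$. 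Taking $\delta_0\leq\tfrac{1}{8(n-1)^2}$ then yields $2\sum_i\nu_i^2\leq|\nu_{min}|=-\nu_{min}$, so $\sum_i\mu_i+\mu_{min}\leq 0$, as desired. The only real obstacle is packaging this last combinatorial estimate together with the choice of $\delta_0$ cleanly; everything else is a direct consequence of the convexity of $t\mapsto e^t$.
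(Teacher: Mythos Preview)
Your proof is correct, and in fact your first half is sharper than the paper's: by observing that $\sum_i\mu_i\geq 0$ is AM--GM and that $\mu_{\max}\geq 0$ follows directly from the constraint, you show the first inequality holds with no smallness hypothesis at all. The paper instead treats both halves symmetrically, bounding all $\mu_j$ from below via $1+\mu_j = \prod_{i\neq j}(1+\mu_i)^{-1}\geq (1+\delta)^{-(n-1)}\geq 1-n\delta$ and then expanding $\prod(1+\mu_i)=1+\sum\mu_i+O(\delta^2)$ to conclude $\sum\mu_i+\mu_{\max}\geq \delta - C_n\delta^2\geq 0$.

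The route genuinely differs: you pass to the additive variables $\nu_i=\log(1+\mu_i)$ and exploit convexity of $e^t$ together with a combinatorial bound $\sum_i\nu_i^2\leq 4(n-1)^2\delta_0|\nu_{\min}|$, while the paper stays in multiplicative coordinates and uses the product expansion. Your approach has the advantage of making the first half unconditional and the second half transparently a Taylor remainder estimate; the paper's argument is shorter and avoids the change of variables, at the cost of using the smallness hypothesis where it is not strictly needed. (A minor note: you state the lower bound $\tfrac13 t^2\leq e^t-1-t$ but never use it, so you could drop it.)
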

\begin{proof}
  Suppose that $\mu_{max} = \delta < \delta_0$. For all $j$ we have
  \[ 1+\mu_j = \frac{1}{\prod_{i\ne j} (1+\mu_i)} \geq
    (1+\delta)^{-(n-1)}. \]
  We can choose $\delta_0$ so that if $0 < \delta < \delta_0$, then 
  \[ (1+\delta)^{-(n-1)} \geq 1-n\delta, \]
  so we find $\mu_j \geq -n\delta$ for all $j$. Then if $\delta < 1$,
  we have
  \[ 1 = \prod_{i=1}^n (1+\mu_i) \leq 1 + \sum_{i=1}^n \mu_i +
    C_n\delta^2, \]
  for a constant $C_n$ depending only on $n$, since the remaining
  terms are all at least quadratic in the $\mu_i$. It follows that
  \[ \sum_{i=1}^n \mu_i + \mu_{max} \geq -C_n\delta^2 + \delta. \]
  Finally, if $\delta$ is sufficiently small, then $C_n\delta^2 \leq
  \delta$. The argument for the second statement is completely
  analogous. 
\end{proof}

\begin{proof}[Proof of Proposition~\ref{prop:maxdecay}]
  Let us choose $\theta  = A^{-2}$, and assume $\epsilon$ is
  sufficiently small to apply Proposition~\ref{prop:barrier}. Let
  $\Lambda > 0$ satisfy
  \[ \sup_{B(o,1)\setminus N_\theta} |u| = \Lambda^{-1} D^{-1}, \]
  and suppose that
  \[ \sup_{B(o,1)} |u| &\leq A\sup_{B(0,1)\setminus N_\theta} |u| = \Lambda^{-1}
    D^{-1}\theta^{-1/2}. \]
  In terms of $v$ given by Proposition~\ref{prop:barrier}, 
  set $\tilde{v} = \Lambda^{-1}v$, so that
  $\tilde{v} > u$ on $\partial B(o, 1-\theta)$ by properties
  (2) and (3). We assume in addition
  that $\epsilon$ is sufficiently small (and so $\sup_{B(o,1)}|u|$ is
  sufficiently small) so that $\Lambda^{-1}C_\theta < \delta_0$ for
  the $\delta_0$ from Lemma~\ref{lem:ineq}. 
  
  \bigskip
  \noindent{\bf Claim.} $\tilde{v} > u$ on $B(o,1-\theta)$. \\
  \noindent{\em Proof of Claim.} If this were not the case,
  then setting
  \[ t_0 = \inf\{ t > 0\,:\, \tilde{v} +t > u \text{ on }
    B(o,1-\theta)\}, \]
  the graph of $\tilde{v} + t_0$ will lie above the graph of $u$, and
  the two graphs will touch at a point $q\in B(o,1-\theta)$. At $q$ we
  must have $\ddb u(q) \leq \ddb \tilde{v}(q)$. In orthonormal
  coordinates at $q$ we have $\ddb \tilde{v}(q) \leq
  \Lambda^{-1}C_\theta \mathrm{Id}$ by property (1) in
  Proposition~\ref{prop:barrier}, and so the eigenvalues $\mu_i$ of $\ddb
  u(q)$ are bounded above by $\Lambda^{-1}C_\theta$.  Since
  $\Lambda^{-1}C_\theta < \delta_0$,  Lemma~\ref{lem:ineq} implies
  that $\sum \mu_i + \mu_{max} \geq 0$, but this contradicts property
  (4) in Proposition~\ref{prop:barrier}, since $\ddb u(q) \leq
  \ddb\tilde{v}(q)$. This proves the claim.

  \bigskip
  Using the claim, it follows from property (3) that
  \[ \sup_{B(o,1/2)} u \leq \Lambda^{-1}D = D^2\sup_{B(o,1)\setminus
      N_\theta}|u|, \]
  which gives the required upper bound for $u$.

  The lower bound for $u$ is proved similarly, just comparing with the
  function $-\Lambda^{-1}v$ instead, and using the second statement in
  Lemma~\ref{lem:ineq}. 
  \end{proof}

\subsection{Proof of Proposition~\ref{prop:decayest}}
We prove Proposition~\ref{prop:decayest} by contradiction. Let us
suppose that we have a sequence $u_i$ as in the statement of the
proposition, on balls $B(o_i,1)$, with corresponding constants
$\epsilon_i\to 0$, such that the conclusion of the proposition
fails. We will show that along a subsequence,
for sufficiently large $i$ we can find $\beta_i,g_i, u'_i$ satisfying the required
properties, giving a contradiction. 

\bigskip
\noindent{\bf Step 1.} 
Let us write $\kappa_i = \sup_{B(o_i,1)} |u_i| \to 0$. By
Lemma~\ref{lem:hlimit} we have a harmonic function $h$ on
$B(0,1)\in C(Y)$ such that, after choosing a subsequence,
$\kappa_i^{-1} u_i\to h$ in $C^\infty$ uniformly on $B(o_i,1-\theta)\setminus
N_\theta$ for any $\theta > 0$. Note that we have $|h|\leq 1$, and it
is possible that $h=0$. Let us write $h=h^{\leq 2} + h^{>2}$ for the
decomposition of $h$ into pieces with at most quadratic and faster
than quadratic growth. In addition we decompose $h^{\leq 2} = h_{ph} +
h_{aut}$, where $h_{ph}$ is pluriharmonic, and $h_{aut}$ is in the
span of the functions of type (2) and (3) in
Lemma~\ref{lem:CYharmonic}. From Lemma~\ref{lem:vbeta} we obtain a real
holomorphic vector field $V$ on $\mathbf{C}^{n+1}$ preserving the
hypersurfaces
\[ az + x_1^2 + \ldots + x_n^2=0, \]
and a constant $\beta$, such that $L_V\Omega = n\beta\Omega$, and at the same
time on $C(Y)\subset\mathbf{C}^4$ we have
\[ V(|z|^2 + |x|^{2\frac{n-1}{n-2}}) - \beta(|z|^2+|x|^{2\frac{n-1}{n-2}}) = h_{aut}. \]
We define the automorphism $g_i = \exp(\kappa_i V)$ on $\mathbf{C}^{n+1}$,
and the constants $\beta_i = \kappa_i\beta$.

For any $\theta >
0$, the spaces $B(o_i, 1)\setminus N_\theta$ converge smoothly to
$B(0,1)\setminus N_\theta$ inside $\mathbf{C}^{n+1}$, and for sufficiently
large $i$ we can use the nearest point projection to identify
them. On
$B(o_i, 1)$ we have K\"ahler potentials $\phi_i$ for $\omega_i$, which
converge smoothly to $|z|^2 + |x|^{2\frac{n-1}{n-2}}$ as $i\to \infty$, uniformly on
$B(o_i,1)\setminus N_\theta$ for any $\theta > 0$. It follows from
this, that
\[ \sup_{B(o_i,1)\setminus N_\theta} \Big| \kappa_i V\phi_i -
  \beta_i\phi_i - \kappa_i h_{aut}\Big| \leq \kappa_i
  \Psi(i^{-1}\,|\,\theta), \]
and so we also have
\[ \sup_{B(o_i,1)\setminus N_\theta} \Big| e^{-\beta_i}g_i^*\phi_i -
  \phi_i - \kappa_i h_{aut}\Big| \leq 
  \kappa_i\Psi(i^{-1}\,|\,\theta). \]
Here $\Psi(i^{-1}\,|\,\theta)$ denotes a function, which for fixed
$\theta$ converges to zero as $i\to\infty$. 
At the same time, using Proposition~\ref{prop:vlingrowth}, we have a
uniform bound $|V|_{\omega_i} < C$ on $B(o_i,1)$, since this ball is a
ball centered at the origin in our reference metric
$(\mathbf{C}^n,\omega_0)$ scaled down to unit size.  Together with the
uniform gradient bound for $\phi_i$ (since $\Delta_{\omega_i}\phi_i =
n$), this implies
\[ \sup_{B(o_i,1)} \Big| e^{-\beta_i}g_i^*\phi_i - \phi_i \Big| \leq
  C\kappa_i. \] 

To deal with $h_{ph}$ note that $h_{ph}$ is in the span of the real and
imaginary parts of $1,z,z^2,x_i$, and so $h_{ph}$ also defines
a pluriharmonic function $h_{ph,i}$ on $B(o_i,1)$ for all $i$ under the
embeddings into $\mathbf{C}^{n+1}$. For any $\theta > 0$ we have
\[ \sup_{B(o_i,1)\setminus N_\theta} \Big| h_{ph,i} - h_{ph} \Big| <
  \Psi(i^{-1}\,|\,\theta), \]
where as above, we are using the nearest point projection on
$B(o_i,1)\setminus N_\theta$ for sufficiently large $i$ to view
$h_{ph}$ as a function on $B(o_i,1)$. We also clearly have
\[ \sup_{B(o_i,1)} | h_{ph,i}| \leq
  C. \]

We can now define
\[ u_i' = u_i - \kappa_ih_{ph,i} - (e^{-\beta_i}g_i^*\phi_i -
  \phi_i). \]
By the construction we have
\[ \label{eq:bgeq1}
  \omega_i + \ddb u_i = e^{-\beta_i}g_i^*\omega_i + \ddb u_i', \]
and $e^{-\beta_i}g_i^*\omega_i$ has the same volume form as
$\omega_i$. By the estimates above, we have
\[ \sup_{B(o_i,1)\setminus N_\theta} |u_i' - h_i^{> 2}|
  \leq \Psi(i^{-1}\,|\,\theta) \kappa_i, \]
and also 
\[ \sup_{B(o_i,1)} |u_i'| \leq C\kappa_i. \]
Letting $\theta\to 0$, we find that 
\[ \Vert u_i' - h_i^{> 2} \Vert_{L^2(B(o_i,1))} 
  \leq \Psi(i^{-1}) \kappa_i, \]
where near the singular set
we use a Gromov-Hausdorff approximation to view $h_i^{>2}$ as a
function on $B(o_i,1)$.

Since $h_i^{>2}$ has faster than quadratic growth, there exists an
$\alpha > 0$ (depending only on the cone $C(Y)$), such that
\[ \Vert h_i^{>2}\Vert_{B(0,\lambda)} \leq \lambda^{2+2\alpha} \Vert
  h_i^{>2}\Vert_{B(0,1)}. \]
Here, and below, for any ball $B$ we define
\[ \Vert f\Vert_B = \left( \mathrm{Vol}(B)^{-1}\int_B
    |f|^2\right)^{1/2}\]
to be the $L^2$-norm normalized by the volume of the ball $B$.

We therefore have
\[ \Vert h_i^{>2}\Vert_{B(0,\lambda)} &\leq  C\lambda^{2+2\alpha}
  \kappa_i, \]
while also
\[ \Vert h_i^{>2}\Vert_{B(0, \lambda)} &\geq \Vert
  u_i'\Vert_{B(o_i,\lambda)} - \Vert h_i^{>2} -
  u_i'\Vert_{B(o_i,\lambda)} \\
  &\geq \Vert
  u_i'\Vert_{B(o_i,\lambda)} - C_\lambda \Vert h_i^{>2} -
  u_i'\Vert_{B(o_i,1)} \\
  &\geq \Vert
  u_i'\Vert_{B(o_i,\lambda)} - C_\lambda \Psi(i^{-1}) \kappa_i, \]
  for a constant $C_\lambda$ depending on $\lambda$. Combining these,
  we get
  \[ \Vert u_i' \Vert_{B(o_i,\lambda)} \leq (C\lambda^{2+2\alpha} +
    C_\lambda\Psi(i^{-1}))\kappa_i. \]
  Once $i$ is sufficiently large (depending on $\lambda$), we get
  \[ \Vert u_i' \Vert_{B(o_i,\lambda)} \leq 2C\lambda^{2+2\alpha}\kappa_i.\]

  \bigskip
  \noindent{\bf Step 2.} Let us apply the construction in Step 1 to
  $8\lambda$ instead of $\lambda$, and let us scale the balls $B(o_i,
  8\lambda)$ up to unit size.  We denote the origins of the
  scaled up balls by $o_i'$, and also let $\omega_i' =
  (8\lambda)^{-2}\beta_i g_i^*\omega_i$.  Letting $U_i' = (8\lambda)^{-2}u_i'$, on
  $B(o_i',1)$ we have
  \[ (\omega_i' + \ddb U_i')^3 = \omega_i'^3, \]
  and for fixed $\lambda$, as $i\to \infty$,
  the metrics $\omega'_i$ on $\mathbf{C}^n$ satisfy the same assumptions as
  $\omega$ in the statement of Proposition~\ref{prop:decayest} for
  arbitrarily small $\epsilon$. By Step 1, (replacing $C$ by a
  larger constant if necessary) we have
  \[ \label{eq:U'bounds}
    \sup_{B(o_i',1)} |U_i'| &\leq C\lambda^{-2} \kappa_i, \\
    \Vert U_i'\Vert_{B(o_i', 1)} &\leq C\lambda^{2\alpha} \kappa_i. \]
  By Lemma~\ref{lem:hlimit} we have a harmonic function $H$ on
  $B(0,1)\subset C(Y)$, such that after choosing a subsequence
  \[ \frac{U_i'}{\sup_{B(o_i',1)} |U_i'|} \to H, \]
  the convergence being in $C^\infty$ uniformly on
  $B(o_i',3/4)\setminus N_\theta$ for any $\theta > 0$. There are two
  cases:
  \begin{itemize}
    \item Suppose that $H\not= 0$. Then by the $L^\infty$ bound for
      harmonic functions on $C(Y)$, we have
      \[ \sup_{B(0,1/2)} |H| \leq C \Vert H\Vert_{B(0,1)}. \]
      It follows that for any $\theta > 0$, once $i$ is sufficiently
      large, we have
      \[ \frac{ \sup_{B(o_i',1/2)\setminus N_\theta}
          |U_i'|}{\sup_{B(o_i',1)} |U_i'|} &\leq 2\sup_{B(0,1/2)}|H|
        \\
        &\leq 2C\Vert H\Vert _{B(0,1)} \\
        &\leq 4C \frac{ \Vert
          U_i'\Vert_{B(o_i',1)}}{\sup_{B(o_i',1)}|U_i'|}, \]
      and so using \eqref{eq:U'bounds} we have
      \[ \sup_{B(o_i',1/2)\setminus N_\theta} |U_i'| \leq 4C^2\lambda^{2\alpha}
        \kappa_i. \]
      \item Suppose that $H=0$. Then for any $\theta > 0$ we
        have
        \[ \sup_{B(o_i',1/2)\setminus N_\theta} |U_i'| \leq \lambda^{2+2\alpha}
          \sup_{B(o_i',1)} |U_i'| \leq C\lambda^{2\alpha}\kappa_i, \]
        once $i$ is sufficiently large.
      \end{itemize}
      In either case, applying Proposition~\ref{prop:maxdecay} to $B(o_i',1/2)$, we
      can choose $\theta > 0$ depending on $\lambda$, such that for
      sufficiently large $i$ we have
      \[ \sup_{B(o_i', 1/4)} |U_i'| \leq C'\lambda^{2\alpha} \kappa_i, \]
      for a constant $C'$ depending only on $C(Y)$. After rescaling,
      we get the bound $\sup_{B(o_i,2\lambda)} |u_i'| \leq
      64C'\lambda^{2+2\alpha}\kappa_i$, and from \eqref{eq:bgeq1} we
      have
      \[  e^{\beta_i}(g_i^{-1})^*(\omega_i + \ddb u_i) = \omega_i
        +\ddb\left( e^{\beta_i}(g_i^{-1})^*u_i'\right). \]
      Letting $u_i'' = e^{\beta_i}(g_i^{-1})^*u_i'$ we have
      \[ \sup_{B(o_i, \lambda)} |u_i''| \leq
        100C'\lambda^{2+2\alpha}\kappa_i \]
      as long as $g_i^{-1}(B(o_i, \lambda))\subset B(o_i,2\lambda)$,
      and $e^{\beta_i} < 3/2$. Both of these estimates will hold once
      $i$ is sufficiently large (depending on $\lambda$). Finally we
      just need to ensure that $\lambda$ is sufficiently small so that
      $100C'\lambda^{2+2\alpha} < \lambda^{2+\alpha}$.

      \subsection{Proof of Theorem~\ref{thm:main}}
      We now  prove the main result,
      Theorem~\ref{thm:main}. Suppose that we have a Calabi-Yau
      metric $(X',\eta)$ with $X'$ biholomorphic to $\mathbf{C}^n$, and
      with tangent cone $\mathbf{C}\times
     A_1$ at infinity. Let us write $(X,\omega_0)$ for our reference
     metric on $\mathbf{C}^n$ discussed in Section~\ref{sec:ref}. 
    We have the origin $o\in X$, and
     using Lemma~\ref{lem:o} we also have a distinguished basepoint
      $o'\in X'$. 
      By the discussion in Section~\ref{sec:ref} we have
      embeddings $F_i:X\to\mathbf{C}^{n+1}$ such that $F_i(o)=0$, and the
      image $F_i(X)$ has equation
      \[ a_iz + x_1^2+\ldots + x_n^2=0, \]
      where $a_i = 2^{-in/(n-2)}$. At the same time using
      Proposition~\ref{prop:goodembed} we have
      embeddings $F_i':X'\to\mathbf{C}^{n+1}$
      such that $F_i'(o)=0$, and the image $F_i'(X')$
      satisfies the equation
      \[   a_i'z + x_1^2+ \ldots +x_n^2 &= 0, \]
      with $a_i' > 0$. In addition the unit balls for
      the scaled down metrics $2^{-2i}\omega_0, 2^{-2i}\eta$ converge
      in the Gromov-Hausdorff sense
      to the unit ball $B(0,1)\subset C(Y)$, and the maps
      $F_i, F_i'$ converge to the standard embedding $B(0,1)\subset
      \mathbf{C}^{n+1}$ as $i\to\infty$. 

      We need to find suitable $j(i)$ such that
      after a further scaling by a bounded factor, the images
      $F_{j(i)}'(X')$ satisfy the same equations as $F_i(X)$. 
      Since $a_i/a_{i+1}= 2^{n/(n-2)}$ and $a_i'/a_{i+1}'\to 2^{n/(n-2)}$, for all
      sufficiently large $i$ we can find $j(i)$ such that
      \[ C_n^{-1} a_{j(i)}' < a_i < C_n a_{j(i)}', \]
      for a dimensional constant $C_n$,   and $j(i)\to\infty$ as
      $i\to\infty$. Let us compose
      $F_{j(i)}'$ with an automorphism $g_i$ of $\mathbf{C}^{n+1}$ of the form
      $(z,\mathbf{x})\mapsto (c_iz, c_i^{\frac{n-1}{n-2}}\mathbf{x})$ for $c_i >
      0$. Then $g_i\circ F_{j(i)}'(X')$ satisfies the equation
      \[ c_i^{\frac{n}{n-2}}a_{j(i)}'z + x_1^2 + \ldots + x_n^2 = 0, \]
      so we can choose $c_i\in (C_n^{-1}, C_n)$ so that $g_i\circ
      F_{j(i)}'(X')$ satisfies the same equation as $F_i(X)$. The
      balls $B_\eta(o', c_i^{-1}2^{-j(i)})$ with the scaled metrics
      $\eta_i = c_i^{-2}2^{-2j(i)}\eta$ still converge to the unit ball
      $B(0,1)\subset C(Y)$, and on these balls the maps
      $g_i\circ F'_{j(i)}(X')$ still converge to the standard
      embedding of $B(0,1)$ into
      $\mathbf{C}^{n+1}$ as $i\to\infty$. Moreover the volume form of $\eta_i$ is the
      pullback of $\sqrt{-1}^{n^2}\Omega\wedge\bar\Omega$ under $g_i\circ
      F_{j(i)}'$. 

      Let us write $\omega_i = 2^{-2i}\omega_0$, and $B_i$ for the unit
      ball around $o$ with the metric $\omega_i$. Similarly let $B_i'$
      be the unit ball around $o'$ with the metric $\eta_i$. We have
      biholomorphisms $\Phi_i : X \to X'$ defined by
      \[ \Phi_i = (g_i\circ F_{j(i)}')^{-1}\circ F_i, \]
      which satisfy $\Phi_i(o)=o'$, and $\Phi_i^*(\eta^n) =
      \omega_0^n$. We claim that on the ball $B_i$ we have
      \[ \label{eq:d2} | \Phi_i^*d_{\eta_i}(o', \cdot) - d_{\omega_i}(o,\cdot)| <
        \Psi(i^{-1}). \]
      For this, let $x\in B_i$, and $x' = \Phi_i(x)$. By the
      construction and Proposition~\ref{prop:goodembed}
      we have $\Psi(i^{-1})$-Gromov-Hausdorff
      approximations $G:B_i\to B(0,1)$ and $G':B_i' \to B(0,1)$
      satisfying $G(o)=0, G'(o')=0$ such
      that viewing $B(0,1)\subset\mathbf{C}^{n+1}$ under the standard
      embedding, we have $|G(x) - F_i(x)| < \Psi(i^{-1})$ and $|G'(x')
      - g_i\circ F_{j(i)}'(x')| < \Psi(i^{-1})$. Note that
      $F_i(x)=g_i\circ F_{j(i)}'(x')$ by  assumption, and so $|G(x) -
      G'(x')| < \Psi(i^{-1})$. At the same time, under the cone metric on
      $B(0,1)$, the distance from $0$ is H\"older continuous with
      respect to the   Euclidean distance (it is given up to a factor
      by $|x|^{\frac{n-2}{n-1}}$), so this means
      \[ |d_{B(0,1)}(0, G(x)) - d_{B(0,1)}(0,G'(x'))| <
        \Psi(i^{-1}). \]
      Since $G, G'$ are Gromov-Hausdorff approximations,
      we get
      \[ |d_{\omega_i}(o, x) - d_{\eta_i}(o',x')| < \Psi(i^{-1}) \]
      as claimed.

      The balls $B_i, B_i'$ are both $\Psi(i^{-1})$-
      Gromov-Hausdorff close to the unit ball in $C(Y)$.  For
      sufficiently large $i$ we can then use \cite[Proposition
      3.1]{LSz18} to find K\"ahler potentials $\phi_i, \phi_i'$ for
      $\omega_i, \eta_i$, such that
      \[ |\phi_i - d_{\omega_i}(o,\cdot)^2/2| &< \Psi(i^{-1}), \\
        |\phi_i' - d_{\eta_i}(o',\cdot)^2/2| &< \Psi(i^{-1}). \]
      Using \eqref{eq:d2} 
      this means that on $B_{\omega_i}(o, 1)$ we can write
      \[ \Phi_i^*(\eta_i) = \omega_i + \ddb u_i, \]
      with $\sup_{B_{\omega_i}(o,1)} |u_i| < \Psi(i^{-1})$.
      
      We will next apply Proposition~\ref{prop:decayest}. We can
      assume that the $\lambda$ in the proposition is of the form
      $\lambda = 2^{-m}$ for an integer $m$.
      We can also choose $i_0 > 0$, such that the assumptions of
      Proposition~\ref{prop:decayest} hold for $\omega_i$ and $u_i$ on
      $B_{\omega_i}(o,1)$, for all $i \geq i_0$.
      Let us fix a large $k > 0$, and apply the proposition for
      $i=i_0+km$. We have
      \[ \sup_{B_{\omega_{i_0+km}}(o,1)} |u_{i_0+km}| < \epsilon_k, \]
      where $\lim_{k\to\infty} \epsilon_k=0$. 
      We find a $\beta_k,g_k$ and $u_k'$ such that
      \[\label{eq:bkPhi1} \beta_kg_k^*\Phi_k^*\eta_{i_0+km} =
        \omega_{i_0+km} +  \ddb u_k'\]
      on $B_{\omega_{i_0+km}}(o,1)$ together with the estimate
      \[ \sup_{B_{\omega_{i_0+km}}(o,\lambda)} |u_k'| \leq
        \lambda^{2+\alpha}\epsilon_k. \]
      Note that $B_{\omega_{i_0+km}}(o,\lambda) =
      B_{\omega_{i_0+(k-1)m}}(o,1)$. Scaling \eqref{eq:bkPhi1} up by a
      factor of $\lambda^{-2}$, we have
      \[ \lambda^{-2}\beta_k^*g_k^*\Phi_k^*\eta_{i_0+km} = \omega_{i_0+(k-1)m}
        + \ddb \lambda^{-2}u_k', \]
      and $\sup_{B_{\omega_{i_0+(k-1)m}}(o,1)} |\lambda^{-2}u_k'| \leq
      \lambda^\alpha\epsilon_k\leq \epsilon_k$, where we dropped
      the $\lambda^\alpha$ factor.  Note that by construction the
      volume forms of
      $\lambda^{-2}\beta_k^*g_k^*\Phi_k^*\eta_{i_0+km}$
      and $\omega_{i_0+(k-1)m}$ are equal, and so we can apply
      Proposition~\ref{prop:decayest} again, iterating the above
      argument. After $k$ steps we obtain a constant $\Lambda_k$, a
      biholomorphism $G_k: X\to X'$ satisfying $G_k(o)=o'$,
      and a function $U_k$ such that
      \[ G_k^*(\Lambda_k \eta) = \omega_{i_0} + \ddb U_k \]
      on $B_{\omega_{i_0}}(o,1)$, together with the estimate
      \[ \sup_{B_{\omega_{i_0}}(o,1)} |U_k| \leq \epsilon_k, \]
      such that
      \[ (\omega_{i_0} + \ddb U_k)^n = \omega_{i_0}^n. \]
      Here we have absorbed the additional scaling between $\eta$
      and $\eta_k$ into the constant $\Lambda_k$. Note that fixing $i_0$
      we can take $k\to\infty$, and once $\epsilon_k$ is sufficiently
      small, we can apply Savin's small perturbation
      result~\cite{Sav07} to find that on $B_{\omega_{i_0}}(o,1/2)$ we
      have $U_k\to 0$ in $C^\infty$. Since $G_k(o)=o'$,
      we find that for sufficiently large $k$
      \[ B_{\Lambda_k\eta}(o',
        1/4) \subset G_k(B_{\omega_{i_0}}(o,1/2)) \subset B_{\Lambda_k\eta}(o',
        1), \]
      and moreover $G_k^*(\Lambda_k\eta)\to \omega_{i_0}$ in
      $C^\infty$ on
      $B_{\omega_{i_0}}(o,1/2)$. If $\Lambda_k\to 0$, then this is a
      contradiction, since $\eta$ is not flat, and so the
      curvature of $B_{\Lambda_k\eta}(o',1/4)$ blows up as
      $\Lambda_k\to 0$. Similarly
      $\Lambda_k\to\infty$ leads to a contradiction since $\omega_0$ is
      not flat. Choosing a subsequence we can assume $\Lambda_k \to
      \Lambda_\infty > 0$. It follows that we can then take a limit
      $G_k\to G_\infty$ on $B_{\omega_{i_0}(o,1/4)}$ which gives a
      holomorphic   isometry
      \[ G_\infty : B_{\omega_{i_0}}(o,1/4) \to
        B_{\Lambda_\infty\eta}(o',1/4). \]
      We can repeat the same argument for any $i > i_0$, and
      extract a global holomorphic isometry between $(X,\omega_0)$ and
      $(X', \Lambda\eta)$ for a suitable $\Lambda$.

      \section{Further directions}\label{sec:further}
      The approach that we used to prove Theorem~\ref{thm:main} can be
      applied in more general situations. One natural generalization
      would be to study the uniqueness of all of the metrics
      constructed in the author's work \cite{Sz17}, or by
      Conlon-Rochon~\cite{CR17}, given their tangent cones. The places
      where we used the specific choice $\mathbf{C}\times A_1$ for the
      tangent cone were in Lemma~\ref{lem:vbeta} in order to understand
      the quadratic growth harmonic functions, and in
      Proposition~\ref{prop:goodembed} which allowed us to construct
      embeddings of a given Calabi-Yau space as a specific
      hypersurface. When we consider more general tangent cones, then
      these results need to be suitably modified. We expect that in
      general the Calabi-Yau metric with a given tangent cone is not
      unique, however we hope that our methods can be used to
      describe the moduli space of such metrics. 

      To illustrate this, let us
      consider the next simplest example, namely the metric
      $\omega_0$ on   $\mathbf{C}^3$ with tangent cone $\mathbf{C}\times
      A_2$ at infinity, constructed by viewing
      $\mathbf{C}^3\subset\mathbf{C}^4$ as the hypersurface
      \[ \label{eq:A2} z + x_1^2 + x_2^2 + y^3 = 0. \]
      This metric has the property that we have holomorphic functions
      $z,x_1,x_2,y$ whose degrees satisfy $d(z)=1,d(x_1)=3,
      d(x_2)=3, d(y)=2$, and which satisfy \eqref{eq:A2}. Suppose
      now that $(X,\eta)$ is another Calabi-Yau metric with the same
      tangent cone,  and we try to argue as in
      Proposition~\ref{prop:goodembed}. The same arguments show
      that we can embed $X$ into $\mathbf{C}^4$ as a hypersurface
      given by a linear equation in monomials of total degree at most
      6. Moreover this
      equation is a small perturbation of the equation $x_1^2+x_2^2 +
      y^3=0$ defining the tangent cone. As in
      Proposition~\ref{prop:goodembed} we can perform simplifications,
      but we cannot always reduce to the equation \eqref{eq:A2}. Instead
      we may end up with an equation of the form
      \[ az + by + x_1^2 + x_2^2 + y^3 = 0, \]
      for small constants $a,b$, with $a\ne 0$. Note that this defines
      a hypersurface biholomorphic to $\mathbf{C}^3$. When $b\ne 0$, then
      we cannot make the change of coordinate $z' = z + a^{-1}by$ to
      reduce to an equation of the form \eqref{eq:A2}, since $y$ has
      faster growth than $z$. Indeed, we expect that one can construct a
      one-parameter family of inequivalent
      Calabi-Yau metrics on $\mathbf{C}^3$ with tangent cone
      $\mathbf{C}\times A_2$, using the methods from
      \cite{Li17, CR17, Sz17}, viewing
      $\mathbf{C}^3\subset\mathbf{C}^4$ as the hypersurface
      \[ z + by + x_1^2 + x_2^2+y^3=0. \]
      More precisely we expect the following.
      \begin{conj}\label{conj:1} Up to scaling and isometry there is a one parameter
        family of Calabi-Yau metrics on $\mathbf{C}^3$ with tangent
        cone $\mathbf{C}\times A_2$ at infinity.
      \end{conj}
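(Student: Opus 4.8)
The plan is to follow the architecture of the proof of Theorem~\ref{thm:main}, replacing the single reference metric $(\mathbf{C}^n,\omega_0)$ by a one-parameter family. Following the discussion after \eqref{eq:A2}, the candidate metrics live on the hypersurfaces
\[
  X_b = \{\, z + b y + x_1^2 + x_2^2 + y^3 = 0 \,\} \subset \mathbf{C}^4, \qquad b \in \mathbf{C},
\]
each biholomorphic to $\mathbf{C}^3$, on which the coordinate functions will have degrees $d(z)=1$, $d(y)=2$, $d(x_1)=d(x_2)=3$. One proves three things: (a) each $X_b$ carries a complete Calabi-Yau metric $\omega_b$ with tangent cone the Stenzel cone $\mathbf{C}\times A_2$; (b) every complete Calabi-Yau metric $(X,\eta)$ on $\mathbf{C}^3$ with this tangent cone is isometric, up to scaling, to some $\omega_b$; and (c) $\omega_b$ and $\omega_{b'}$ are isometric up to scaling exactly when $|b|=|b'|$. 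Together these identify the moduli space with the interval $[0,\infty)$ parametrized by $|b|$, with $|b|=0$ giving the reference metric $\omega_0$ of Section~\ref{sec:ref}.

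For (a) one runs the gluing construction of \cite{Sz17} (see also \cite{CR17}) on $X_b$. In the region where $|z|$ is large, $X_b$ fibres over the $z$-line with fibres $\{z_0 + b y + x_1^2 + x_2^2 + y^3 = 0\}$, which are smooth members of the semiuniversal deformation of the $A_2$ surface singularity and hence carry asymptotically conical Calabi-Yau metrics with tangent cone $A_2$ by Conlon--Hein~\cite{CH13}; where all the coordinates are comparable one uses the Stenzel cone metric on $\mathbf{C}\times A_2$ directly. Interpolating these with the cutoff functions of \cite{Sz17} produces an approximately Calabi-Yau metric whose tangent cone is $\mathbf{C}\times A_2$ --- the terms $z$ and $b y$ are of lower degree than the degree-six quantity $x_1^2+x_2^2+y^3$ and so do not perturb the cone --- and the weighted analysis of \cite{CH13,Sz17} then yields an exact Calabi-Yau metric $\omega_b$ asymptotic to it. Since $X_b \cong \mathbf{C}^3$ has vanishing second cohomology and $\omega_b$ has maximal volume growth, $\omega_b = \ddb\psi_b$ has a global K\"ahler potential, so the Donaldson--Sun theory of Section~\ref{sec:embed} applies. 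The work here is to carry out the region-by-region estimates of \cite{Sz17} with the parameter $b$ present; the delicate point, which I expect to be the main obstacle, is to make the construction uniform over all of $\mathbf{C}$, not merely for $|b|$ small, since for large $|b|$ the intermediate gluing region changes shape and the matching estimates must be reorganized.

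For (b) I would follow Sections~\ref{sec:embed}--\ref{sec:decay}, with the following modifications. The analogue of Proposition~\ref{prop:goodembed} is the argument sketched at the end of Section~\ref{sec:further}: Donaldson--Sun gives embeddings $F_i : X \to \mathbf{C}^4$ by functions of degrees $1,2,3,3$ whose image is cut out by a linear combination of monomials of degree at most six that is a small perturbation of $x_1^2+x_2^2+y^3=0$; completing the square in $x_1,x_2$, depressing the cubic in $y$, and using $X\cong\mathbf{C}^3$ to eliminate the remaining low-degree monomials in $z,y$, one reduces to $a_i z + b_i y + x_1^2+x_2^2+y^3=0$ with $a_i\neq0$, and after normalizing $a_i$ by scaling reads off $b=\lim b_i$. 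The analogue of Lemmas~\ref{lem:CYharmonic} and~\ref{lem:vbeta} is in fact cleaner: because $A_2$ has an \emph{isolated} singularity, the Fourier decomposition in the $\mathbf{C}$-direction reduces the analysis of at most quadratic-growth harmonic functions on $\mathbf{C}\times A_2$ directly to Hein--Sun~\cite{HS16} applied to $A_2$, and the ones that are not pluriharmonic come from the rotation $SO(2,\mathbf{R})$ acting on $(x_1,x_2)$ and from the relative scaling of the two cone factors. The corresponding real holomorphic vector fields --- the rotation $\mathrm{Re}(x_1\partial_{x_2}-x_2\partial_{x_1})$ and a modified Euler-type field --- preserve $X_b$; because $b\neq0$ the Euler-type field acquires a correction term proportional to $y\,\partial_z$, which produces a harmless extra term of cubic growth and requires only a minor adjustment of Lemma~\ref{lem:vbeta} and of Step~1 in the proof of Proposition~\ref{prop:decayest}. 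The linear-growth bounds of Proposition~\ref{prop:vlingrowth} for $\omega_b$ fall out of the region description from (a); the barrier of Proposition~\ref{prop:barrier} and the maximum principle of Proposition~\ref{prop:maxdecay} go through unchanged in structure, since every singular point of $\mathbf{C}\times A_2$ is still a vertex of the cone, though the exponent $3/4$ in $\phi^{-3/4}$ must be rechecked against the geometry of $A_2$ (any exponent strictly between a geometry-dependent threshold and $1$ suffices). With these in hand, Proposition~\ref{prop:decayest} and the iteration in the proof of Theorem~\ref{thm:main} give $\eta = \Lambda F^*\omega_b$ for the value of $b$ produced by the embedding step, a biholomorphism $F$, and a constant $\Lambda>0$.

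Finally (c) follows from the rigidity already built into the argument. If $\Psi : X_b \to X_{b'}$ is an isometry up to scaling, then at infinity it induces an isometry up to scaling of the Stenzel cone, hence a composition of a homothety with an isometry fixing the vertex; matching this against the allowed coordinate changes --- which, as in Lemma~\ref{lem:o}, force $\Psi$ to be graded-linear of the form $(z,y,x)\mapsto(\gamma^3 z,\gamma y,\gamma^{3/2}O x)$ with $O\in O(2,\mathbf{R})$ and $b' = \gamma^2 b$ --- forces the homothety factor to be trivial and $|\gamma|=1$, hence $|b'|=|b|$. Conversely, for $|\gamma|=1$ the map $(z,y,x)\mapsto(\gamma^3 z,\gamma y,\gamma^{3/2}x)$ is an isometry of the Stenzel cone $\mathbf{C}\times A_2$ (a rotation of the $\mathbf{C}$-factor composed with a Reeb rotation of $A_2$) that carries $X_b$ onto $X_{\gamma^2 b}$ while preserving the tangent cone, so by (b) the isometry class of $\omega_b$ depends only on $|b|$. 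As indicated, the principal difficulty is (a): performing the construction of \cite{Sz17} with the deformation parameter included, uniformly in $b$. A secondary difficulty, in (b), is the reduction of the Donaldson--Sun equation to the normal form $a z+b y+x_1^2+x_2^2+y^3=0$: ruling out the a priori possible lower-degree monomials in $z$ and $y$ uses $X\cong\mathbf{C}^3$ together with an Abhyankar--Moh-type fact recognizing $\{P(z,y)=0\}$ as a coordinate hypersurface, which has no counterpart in the $A_1$ case treated here.
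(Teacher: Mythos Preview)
The statement you are attempting to prove is labeled a \emph{conjecture} in the paper, and the paper does not prove it. There is therefore no proof in the paper against which your attempt can be compared. The surrounding discussion in Section~\ref{sec:further} only sketches why one expects a one-parameter family---namely that the Donaldson--Sun embedding argument for tangent cone $\mathbf{C}\times A_2$ reduces the defining equation to $az+by+x_1^2+x_2^2+y^3=0$ rather than to a single normal form---and then states that ``we expect that the proof of Theorem~\ref{thm:main} can be extended to prove the classification results, once the corresponding existence results are shown using the techniques of \cite{Li17, CR17, Sz17}.'' Your proposal is an elaboration of exactly this expectation, not a proof.

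As a research outline your three-step plan (existence of $\omega_b$; classification via a modified Proposition~\ref{prop:goodembed} and Proposition~\ref{prop:decayest}; identification of isometry classes with $|b|$) is reasonable and matches what the paper anticipates. You have also correctly located the two genuine obstacles: the existence step (a), which requires carrying the construction of \cite{Sz17} through with the deformation term $by$ present and is not in the literature, and the normal-form reduction in (b). But several of the claimed adaptations are asserted rather than argued---for instance, that the Euler-type vector field needed in the analogue of Lemma~\ref{lem:vbeta} can be corrected by a $y\,\partial_z$ term while still yielding the decay in Step~1 of Proposition~\ref{prop:decayest}, or that the exponent in the barrier of Proposition~\ref{prop:barrier} survives for the $A_2$ link geometry---and until these are checked the document remains a proof \emph{strategy}, which is precisely the status the paper itself assigns to the problem.
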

      In view of the
      gluing construction by Li~\cite{Li18} of collapsing Calabi-Yau
      metrics on threefolds (see the discussion in Section 4.2), such
      metrics could  arise as a suitable blowup limit of a collapsing family
      of CY metrics on a threefold that has a fibration locally of the
      form $(x_1,x_2,y) \mapsto x_1^2 + x_2^2 + y^3$.

      A different generalization would be to consider Calabi-Yau
      metrics on more general spaces than $\mathbf{C}^n$. The crucial
      prerequisite for applying the Donaldson-Sun theory in
      Section~\ref{sec:embed}, as well as \cite[Proposition
      3.1]{LSz18}, was that the
      metric $\omega_0$ is $\partial\bar\partial$-exact, and we
      expect that our methods can be extended to classifying such
      exact Calabi-Yau metrics. For instance the smoothing
      $Q^n\subset\mathbf{C}^n$  of the $n$-dimensional $A_1$ singularity,
      \[ 1 + x_1^2 + \ldots x_{n+1}^2 = 0, \]
      is expected to admit a Calabi-Yau metric with tangent cone
      $\mathbf{C}\times A_1$ in terms of the $(n-1)$-dimensional $A_1$
      singularity (see e.g. \cite[Section 4.2]{Li18}), and the
      methods used in Theorem~\ref{thm:main} could lead to a
      uniqueness result for this metric.

      More generally, from the argument in
      Proposition~\ref{prop:goodembed} we can read off which manifolds
      can admit a $\partial\bar\partial$-exact
      Calabi-Yau metric with a given tangent cone. For instance the
      following is a natural conjecture to make.
      \begin{conj}\label{conj:CA1}
          Let $n>4$. The only $\partial\bar\partial$-exact
          Calabi-Yau manifolds of dimension $n$ with tangent cone
          $\mathbf{C}\times A_1$ are $\mathbf{C}\times Q^{n-1}$,
          $\mathbf{C}^n$ and $Q^n$. Moreover up to scaling and
          isometry each of these manifolds admits a unique such
          Calabi-Yau metric. 
        \end{conj}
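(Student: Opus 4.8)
The plan is to follow the two-step strategy of Theorem~\ref{thm:main}: first use Donaldson-Sun theory to pin down the possible complex manifolds, and then run the decay iteration of Section~\ref{sec:decay} on each of them. For the first step I would argue exactly as in the proof of Proposition~\ref{prop:goodembed} in the case $n>4$. The coordinate ring $R(C(Y))$ of $C(Y)=\mathbf{C}\times A_1$ is generated by $R_1\oplus R_{\frac{n-1}{n-2}}=\langle z\rangle\oplus\langle x_1,\ldots,x_n\rangle$, so by \cite{DS15} the ring of polynomial growth holomorphic functions on a $\partial\bar\partial$-exact Calabi-Yau manifold $(X,\eta)$ with this tangent cone is generated by the corresponding subspaces, and the adapted bases give embeddings $F_i:X\to\mathbf{C}^{n+1}$ with hypersurface image. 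Counting dimensions through degree $\frac{2n-2}{n-2}$ as in Proposition~\ref{prop:goodembed}, there is exactly one relation among $1,z,z^2,x_i,zx_i,x_ix_j$, and here the hypothesis $n>4$ enters precisely because then $d(z^3)=3>\frac{2n-2}{n-2}$, so no cubic term in $z$ can appear. After coordinate changes $\Psi(i^{-1})$-close to the identity (normalizing the quadratic form in the $x_j$ to $x_1^2+\ldots+x_n^2$, then completing the square to eliminate the $x_j$ and $zx_j$ terms via changes $x_j\mapsto x_j+(\text{affine in }z)$) the relation becomes
\[ a_i z^2 + b_i z + c_i + x_1^2 + \ldots + x_n^2 = 0, \]
with $|a_i|,|b_i|,|c_i|<\Psi(i^{-1})$.

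It then remains to classify the smooth affine hypersurfaces of $\mathbf{C}^{n+1}$ cut out by an equation of this form. If $a_i\ne 0$ the quadratic part is nondegenerate, and completing the square in $z$ and rescaling gives $w^2+x_1^2+\ldots+x_n^2=1$ (the constant after completion being forced nonzero by smoothness of $X$), which is $Q^n$. If $a_i=0$ and $b_i\ne 0$, then after translating $z$, a step handled exactly by the analogue of Lemma~\ref{lem:o} used to locate a basepoint, we obtain the graph $b_i z+x_1^2+\ldots+x_n^2=0$, which is $\mathbf{C}^n$. If $a_i=b_i=0$, smoothness forces $c_i\ne 0$, and rescaling gives $1+x_1^2+\ldots+x_n^2=0$ in the $x_j$ with $z$ free, which is $\mathbf{C}\times Q^{n-1}$. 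Since the biholomorphism type of $F_i(X)\cong X$ is independent of $i$, exactly one case occurs, and as these three manifolds are pairwise non-biholomorphic the first assertion follows.

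For the uniqueness statement I would treat each manifold separately. On $\mathbf{C}^n$ this is Theorem~\ref{thm:main}. On $\mathbf{C}\times Q^{n-1}$ the reference metric is the product of the flat metric on $\mathbf{C}$ with the asymptotically conical Stenzel metric on $Q^{n-1}$ from Conlon-Hein~\cite{CH13}, which is $\partial\bar\partial$-exact with tangent cone $\mathbf{C}\times A_1$. On $Q^n$ one must first construct the reference metric; I would do this by the gluing construction of \cite{Li17,CR17,Sz17} (or by adapting \cite{Li18}), viewing $Q^n$ as the smoothing $1+x_1^2+\ldots+x_{n+1}^2=0$ and gluing an asymptotically conical Stenzel-type model near the singular locus at infinity onto the product cone $\mathbf{C}\times A_1$. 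Given these reference metrics, the uniqueness argument runs as in Section~\ref{sec:decay}: the analysis of at most quadratic growth harmonic functions on $C(Y)$ (Lemmas~\ref{lem:CYharmonic} and~\ref{lem:vbeta}) and the barrier construction (Proposition~\ref{prop:barrier}) depend only on the cone, which is the same in all three cases, so the key decay estimate Proposition~\ref{prop:decayest} applies once one has established the analogue of Proposition~\ref{prop:vlingrowth} (linear growth of the vector fields $2z\partial_z+x_i\partial_{x_i}$ and $a_{jk}x_j\partial_{x_k}$) for the reference metrics on $\mathbf{C}\times Q^{n-1}$ and $Q^n$, which is checked region by region on the explicit model metrics entering the gluing, just as in the proof of Proposition~\ref{prop:vlingrowth}. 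Iterating Proposition~\ref{prop:decayest} and passing to the limit exactly as in the proof of Theorem~\ref{thm:main} then produces a global holomorphic isometry between the given metric and the reference metric.

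I expect the main obstacles to be twofold. First, the existence of a $\partial\bar\partial$-exact Calabi-Yau metric on $Q^n$ with tangent cone $\mathbf{C}\times A_1$ is at present only conjectural; supplying it, together with asymptotics at infinity precise enough to run Proposition~\ref{prop:vlingrowth}, is the principal analytic input. Second, the normalization of the embeddings in the iteration is delicate on $Q^n$: unlike $\mathbf{C}^n$ it carries the large automorphism group $O(n+1,\mathbf{C})$ with no fixed point, so there is no canonical basepoint as provided by Lemma~\ref{lem:o}, and the decay argument must be run modulo these automorphisms. The rotational automorphisms are exactly those generated by the type (3) harmonic functions of Lemma~\ref{lem:CYharmonic}, so the mechanism of Lemma~\ref{lem:vbeta} already accommodates them, but the bookkeeping of basepoints in the proof of Theorem~\ref{thm:main} must be adjusted accordingly.
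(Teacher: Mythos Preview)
The statement you are addressing is labelled \texttt{conj} in the paper: it is a conjecture, not a theorem, and the paper offers no proof. What the paper does provide is a one-line heuristic immediately after the statement, namely that ``the three cases correspond to the function $\tilde{f}_i$ in the equation analogous to \eqref{eq:reducedeq} having degree 0, 1 or 2,'' together with the remark that the proof of Theorem~\ref{thm:main} should extend once the corresponding existence results are established. Your proposal is a faithful and more detailed elaboration of exactly this heuristic: the degree count using $n>4$ to exclude cubic and higher terms in $z$, the trichotomy on the coefficients of $\tilde{f}_i$, and the plan to rerun Section~\ref{sec:decay} on each of the three manifolds all match what the paper sketches.

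You also correctly identify the two genuine gaps that prevent this from being a proof. The existence of a $\partial\bar\partial$-exact Calabi-Yau metric on $Q^n$ with tangent cone $\mathbf{C}\times A_1$ is, as the paper itself says a paragraph earlier, only ``expected''; without it there is no reference metric against which to run the decay iteration, and this is not a detail one can fill in by citing \cite{Li17,CR17,Sz17} as written. The basepoint issue on $Q^n$ is also real: Lemma~\ref{lem:o} and the subsequent normalizations in the proof of Theorem~\ref{thm:main} genuinely use the unique common zero of the embedding functions, and on $Q^n$ no such point exists. Your suggestion to absorb the $O(n+1,\mathbf{C})$ ambiguity into the automorphism step of Proposition~\ref{prop:decayest} is plausible but would require reworking the comparison of successive embeddings $F_i$ and $F_{i+1}$ in Proposition~\ref{prop:goodembed} as well, since property (3) there (the ratio $a_i/a_{i+1}$) was proved via the basepoint. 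In short: your outline matches the paper's intended strategy, and you have named the same obstacles the paper leaves open; there is nothing further to compare because the paper does not claim a proof.
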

        The three cases correspond to the function
        $\tilde{f}_i$ in the equation analogous to
        \eqref{eq:reducedeq} having degree 0, 1 or 2. When
        $n\leq 4$ then there would be more possibilities. For both
        Conjectures~\ref{conj:1} and \ref{conj:CA1} we expect that the
        proof of Theorem~\ref{thm:main}
        can be extended to prove the classification results,
        once the corresponding existence results are shown using the
        techniques of \cite{Li17, CR17, Sz17}. 

      Note that some of the results of Donaldson-Sun~\cite{DS15} can
      also be extended to the case when the metric is not exact, under
      the assumption that the tangent cone is smooth away from the
      vertex (see Liu~\cite{Liu17}). This is closer to the
      setting of asymptotically conical Calabi-Yau metrics considered
      by Conlon-Hein~\cite{CH14} who also obtained classification
      results for Calabi-Yau metrics with prescribed tangent cone. At
      the moment there is little that we can say in this direction
      about general Calabi-Yau manifolds with tangent cones that have non-isolated
      singularities, beyond the result in \cite{LiuSz2} that each
      tangent cone is a normal affine variety. 

\bibliography{../../mybib}
\bibliographystyle{amsplain}

\end{document}